\begin{document}
	\newcommand{\Green}[4]{\mbox{$G^{#1}_{#2}(#3,#4)$}}
\newcommand{\M}{{\mathcal M}}
\newcommand{\loc}{{\mathrm{loc}}}
\newcommand{\core}{C_0^{\infty}(\Omega)}
\newcommand{\sob}{W^{1,p}(\Omega)}
\newcommand{\sobloc}{W^{1,p}_{\mathrm{loc}}(\Omega)}
\newcommand{\merhav}{{\mathcal D}^{1,p}}
\newcommand{\be}{\begin{equation}}
\newcommand{\ee}{\end{equation}}
\newcommand{\mysection}[1]{\section{#1}\setcounter{equation}{0}}
\newcommand{\laplace}{\Delta}
\newcommand{\pl}{\laplace_p}
\newcommand{\grad}{\nabla}
\newcommand{\pd}{\partial}
\newcommand{\bo}{\pd}
\newcommand{\csub}{\subset \subset}
\newcommand{\sm}{\setminus}
\newcommand{\ssm}{:}
\newcommand{\diver}{\mathrm{div}\,}
\newcommand{\bea}{\begin{eqnarray}}
\newcommand{\eea}{\end{eqnarray}}
\newcommand{\bean}{\begin{eqnarray*}}
\newcommand{\eean}{\end{eqnarray*}}
\newcommand{\thkl}{\rule[-.5mm]{.3mm}{3mm}}
\newcommand{\cw}{\stackrel{\rightharpoonup}{\rightharpoonup}}
\newcommand{\id}{\operatorname{id}}
\newcommand{\supp}{\operatorname{supp}}
\newcommand{\wlim}{\mbox{ w-lim }}
\newcommand{\mymu}{{x_N^{-p_*}}}
\newcommand{\R}{{\mathbb R}}
\newcommand{\N}{{\mathbb N}}
\newcommand{\Z}{{\mathbb Z}}
\newcommand{\Q}{{\mathbb Q}}
\newcommand{\abs}[1]{\lvert#1\rvert}
\newtheorem{theorem}{Theorem}[section]
\newtheorem{corollary}[theorem]{Corollary}
\newtheorem{lemma}[theorem]{Lemma}
\newtheorem{notation}[theorem]{Notation}
\newtheorem{definition}[theorem]{Definition}
\newtheorem{remark}[theorem]{Remark}
\newtheorem{proposition}[theorem]{Proposition}
\newtheorem{assertion}[theorem]{Assertion}
\newtheorem{problem}[theorem]{Problem}
\newtheorem{conjecture}[theorem]{Conjecture}
\newtheorem{question}[theorem]{Question}
\newtheorem{example}[theorem]{Example}
\newtheorem{Thm}[theorem]{Theorem}
\newtheorem{Lem}[theorem]{Lemma}
\newtheorem{Pro}[theorem]{Proposition}
\newtheorem{Def}[theorem]{Definition}
\newtheorem{defi}[theorem]{Definition}
\newtheorem{Exa}[theorem]{Example}
\newtheorem{Exs}[theorem]{Examples}
\newtheorem{Rems}[theorem]{Remarks}
\newtheorem{Rem}[theorem]{Remark}

\newtheorem{Cor}[theorem]{Corollary}
\newtheorem{Conj}[theorem]{Conjecture}
\newtheorem{Prob}[theorem]{Problem}
\newtheorem{Ques}[theorem]{Question}
\newtheorem*{corollary*}{Corollary}
\newtheorem*{theorem*}{Theorem}
\newtheorem{thm}[theorem]{Theorem}
\newtheorem{lem}[theorem]{Lemma}
\newtheorem{prop}[theorem]{Proposition}
\newtheorem{cor}[theorem]{Corollary}
\newtheorem{ex}[theorem]{Example}
\newtheorem{rem}[theorem]{Remark}
\newtheorem*{thmm}{Theorem}
\newcommand{\Hmm}[1]{\leavevmode{\marginpar{\tiny%
			$\hbox to 0mm{\hspace*{-0.5mm}$\leftarrow$\hss}%
			\vcenter{\vrule depth 0.1mm height 0.1mm width \the\marginparwidth}%
			\hbox to
			0mm{\hss$\rightarrow$\hspace*{-0.5mm}}$\\\relax\raggedright #1}}}

\newcommand{\pf}{\noindent \mbox{{\bf Proof}: }}


\renewcommand{\theequation}{\thesection.\arabic{equation}}
\catcode`@=11 \@addtoreset{equation}{section} \catcode`@=12
\newcommand{\Real}{\mathbb{R}}
\newcommand{\real}{\mathbb{R}}
\newcommand{\Nat}{\mathbb{N}}
\newcommand{\ZZ}{\mathbb{Z}}
\newcommand{\CC}{\mathbb{C}}
\newcommand{\Pess}{\opname{Pess}}
\newcommand{\Proof}{\mbox{\noindent {\bf Proof} \hspace{2mm}}}
\newcommand{\mbinom}[2]{\left (\!\!{\renewcommand{\arraystretch}{0.5}
\mbox{$\begin{array}[c]{c}  #1\\ #2  \end{array}$}}\!\! \right )}
\newcommand{\brang}[1]{\langle #1 \rangle}
\newcommand{\vstrut}[1]{\rule{0mm}{#1mm}}
\newcommand{\rec}[1]{\frac{1}{#1}}
\newcommand{\set}[1]{\{#1\}}
\newcommand{\dist}[2]{$\mbox{\rm dist}\,(#1,#2)$}
\newcommand{\opname}[1]{\mbox{\rm #1}\,}
\newcommand{\mb}[1]{\;\mbox{ #1 }\;}
\newcommand{\undersym}[2]
 {{\renewcommand{\arraystretch}{0.5}  \mbox{$\begin{array}[t]{c}
 #1\\ #2  \end{array}$}}}
\newlength{\wex}  \newlength{\hex}
\newcommand{\understack}[3]{%
 \settowidth{\wex}{\mbox{$#3$}} \settoheight{\hex}{\mbox{$#1$}}
 \hspace{\wex}  \raisebox{-1.2\hex}{\makebox[-\wex][c]{$#2$}}
 \makebox[\wex][c]{$#1$}   }%
\newcommand{\smit}[1]{\mbox{\small \it #1}}
\newcommand{\lgit}[1]{\mbox{\large \it #1}}
\newcommand{\scts}[1]{\scriptstyle #1}
\newcommand{\scss}[1]{\scriptscriptstyle #1}
\newcommand{\txts}[1]{\textstyle #1}
\newcommand{\dsps}[1]{\displaystyle #1}
\newcommand{\dx}{\,\mathrm{d}x}
\newcommand{\dy}{\,\mathrm{d}y}
\newcommand{\dz}{\,\mathrm{d}z}
\newcommand{\dt}{\,\mathrm{d}t}
\newcommand{\dr}{\,\mathrm{d}r}
\newcommand{\du}{\,\mathrm{d}u}
\newcommand{\dv}{\,\mathrm{d}v}
\newcommand{\dV}{\,\mathrm{d}V}
\newcommand{\dW}{\,\mathrm{d}W}
\newcommand{\ds}{\,\mathrm{d}s}
\newcommand{\dS}{\,\mathrm{d}S}
\newcommand{\dk}{\,\mathrm{d}k}
\newcommand{\dm}{\,\mathrm{d}m}
\newcommand{\dmu}{\,\mathrm{d}\gm}

\newcommand{\dphi}{\,\mathrm{d}\phi}
\newcommand{\dtau}{\,\mathrm{d}\tau}
\newcommand{\dxi}{\,\mathrm{d}\xi}
\newcommand{\deta}{\,\mathrm{d}\eta}
\newcommand{\dsigma}{\,\mathrm{d}\sigma}
\newcommand{\dtheta}{\,\mathrm{d}\theta}
\newcommand{\dnu}{\,\mathrm{d}\nu}

\def\ga{\alpha}     \def\gb{\beta}       \def\gg{\gamma}
\def\gc{\chi}       \def\gd{\delta}      \def\ge{\varepsilon}
\def\gth{\theta}                         \def\vge{\varepsilon}
\def\gf{\phi}       \def\vgf{\varphi}    \def\gh{\eta}
\def\gi{\iota}      \def\gk{\kappa}      \def\gl{\lambda}
\def\gm{\mu}        \def\gn{\nu}         \def\gp{\pi}
\def\vgp{\varpi}    \def\gr{\rho}        \def\vgr{\varrho}
\def\gs{\sigma}     \def\vgs{\varsigma}  \def\gt{\tau}
\def\gu{\upsilon}   \def\gv{\vartheta}   \def\gw{\omega}
\def\gx{\xi}        \def\gy{\psi}        \def\gz{\zeta}
\def\Gg{\Gamma}     \def\Gd{\Delta}      \def\Gf{\Phi}
\def\Gth{\Theta}
\def\Gl{\Lambda}    \def\Gs{\Sigma}      \def\Gp{\Pi}
\def\Gw{\Omega}     \def\Gx{\Xi}         \def\Gy{\Psi}

\renewcommand{\div}{\mathrm{div}}
\newcommand{\red}[1]{{\color{red} #1}}

%


\newcommand{\De} {\Delta}
\newcommand{\la} {\lambda}
\newcommand{\bn}{\mathbb{B}^{2}}
\newcommand{\rn}{\mathbb{R}^{2}}
\newcommand{\bnn}{\mathbb{B}^{N}}
\newcommand{\rnn}{\mathbb{R}^{N}}
\newcommand{\kP}{k_{P}^{M}}

\newcommand{\authorfootnotes}{\renewcommand\thefootnote{\@fnsymbol\c@footnote}}%


\def\e{{\text{e}}}
\def\N{{I\!\!N}}

\numberwithin{equation}{section} \allowdisplaybreaks

\title[Perturbation theory of positive solutions]{Some new aspects of perturbation theory of positive solutions of second-order linear elliptic equations}
\author{Debdip Ganguly}
\address{Debdip Ganguly, Department of Mathematics,  Indian Institute of Science Education and Research, Dr. Homi Bhabha Road, Pune 411008, India}
\email{debdipmath@gmail.com}
\author{Yehuda Pinchover}
\address{Yehuda Pinchover,
Department of Mathematics, Technion - Israel Institute of
Technology,   Haifa 3200003, Israel}
\email{pincho@technion.ac.il}


\date{}

\begin{abstract}

We present some new results concerning perturbation theory for positive solutions of second-order linear elliptic operators,  including further study of the equivalence of positive minimal Green functions and the validity of a Liouville comparison principle for nonsymmetric operators.


\vspace{.2cm}

\noindent  2000  \! {\em Mathematics  Subject  Classification.}
{Primary 35B09; Secondary 31C35, 35A08, 35J08.} \\[1mm]
\noindent {\em Keywords.}  Green function, ground state, Liouville comparison principle, quasimetric property, second-order elliptic operator, $3G$-inequality.

\end{abstract}
\maketitle

 \section{Introduction}\label{sec_int}
 
 Let $M$ be a smooth, connected,  and noncompact  Riemannian manifold of dimension $N$. We consider a
  second-order elliptic operator $P$ with real coefficients in the divergence form
\begin{equation} \label{operator}
Pu:=-\div\! \left[A(x)\nabla u +  u\tilde{b}(x) \right]  +
 b(x)\cdot\nabla u   +c(x)u \qquad x\in M.
\end{equation}
More precisely, let $m>0$ be a strictly positive measurable function in $M$ such that $m$ and $m^{-1}$  are bounded on any compact subset of $M$, and denote $\dm: =m(x)\!\dx$, where $\dx$ is the Riemannian volume form of $M$ (which is just the Lebesgue measure in the case of Schr\"odinger operators on domains of $\R^N$). 

\medskip

We denote by $T_xM$ and $TM$ the tangent space to $M$ at $x\in M$ and the tangent bundle, respectively. Let $\mathrm{End}(T_xM)$ and $\mathrm{End}(TM)$ be the set of endomorphisms in $T_xM$ and the corresponding
bundle, respectively. The gradient with respect to the Riemannian metric
is denoted by $\nabla$, and $-\div$ is the formal adjoint of the gradient with respect to the measure ${\rm d}m$. The inner product and the induced norm on $TM$ are
denoted by $\langle X, Y\rangle$ and $|X|$, respectively, where $X, Y \in TM$.

\medskip

We assume that $A$ is a symmetric measurable section on $M$ of $\mathrm{End}(TM)$ such that for any compact set $K$ in $M$ there exists a positive constant $\lambda_K\geq 1$ satisfying
\begin{equation}\label{ell}
\lambda_K^{-1} |\xi|^2
\leq |\xi|^2_{A(x)}  :=\langle A(x)\xi, \xi\rangle 
\leq \lambda_K  |\xi|^2  
\qquad \forall x\in K \mbox{ and } (x,\xi)\in TM.
\end{equation}
We assume also that the coefficients $b$ and $\tilde b$ are measurable vector fields in
$M$ of class $L^p_\loc(M)$  and $c$ is a measurable function in $M$ of class $L^{p/2}_\loc(M)$ for some $p > N$.

We denote by $P^\star$ the formal adjoint operator of $P$ on its natural space $L^2(M,\!\dm)$. 
When $P$ is in divergence form (\ref{operator}) and $b = \tilde{b}$, then the operator
\be\label{symm_P}
Pu = - \div \left[ \big(A \grad u + u b\big) \right] + b \cdot \grad u + c u,
\ee
is {\em symmetric} in the space $L^2(M, \!\dm)$. Throughout the paper, we call this setting the {\em symmetric case}. 
 We note that if $P$ is symmetric and $b$ is smooth enough, then $P$ is in fact a Schr\"odinger-type operator of the form
\be\label{eq-symm}
Pu =  - \div \big(A \grad u \big) + \tilde{c} u,
\ee
where $\tilde{c}=c-\div\, b$.
\medskip 

By a {\em solution} $v$ of the equation $Pu = 0$,  we mean $v \in W^{1,2}_{\loc}(M)$ that satisfies the equation in the {\em weak sense}. Subsolutions and supersolutions are defined similarly. 

 Denote the cone of all positive solutions of the
equation $Pu=0$ in $M$ by $\mathcal{C}_{P}(M)$. Let $V$ be a real valued potential. The {\em
generalized principal eigenvalue} of the operator $P$ and a potential $V\in L^q_\loc(M)$, $q>N/2$,   is defined by
$$\gl_0(P,V,M)
:= \sup\{\gl \in \mathbb{R} \; \mid\; \mathcal{C}_{P-\lambda V}(M)\neq
\emptyset\}.$$
We say that $P$ is {\em nonnegative in} $M$ (and we denote it by $P\geq 0$ in $M$) if $\lambda_0:= \lambda_0(P,\mathbf{1},M)\geq 0$,
 where $\mathbf{1}$ is the constant function on $M$ taking
at any point $x\in M$ the value $1$. Throughout the paper we always assume that
$\gl_0\geq 0$, that is, $P\geq 0$ in $M$.

\medskip

The main purpose of the paper is to present some new results concerning perturbation theory of the cone $\mathcal{C}_{P}(M)$. Perturbation theory of positive solutions was studied extensively in the past few decades. S.~Agmon in \cite{AG1, AG2} studied positivity and decay properties of solutions of second-order elliptic equations using the notion of \emph{Agmon ground state}. His results turned out to be highly influential in the study of the structure of $\mathcal{C}_{P}(M)$ and its behaviour under certain types of perturbations (the so-called {\em criticality theory}). 
Without any claim of completeness, we refer to some relevant papers studying criticality theory \cite{AN, AA, GH, MHM,MM0,MM1, YP3, YP1, YP2,YP5, Pinsky95} and references therein. 

\medskip

The perturbation that we consider here is of the form $P_\gl:= P-\gl V$, where $P\geq 0$ in $M$, $\gl\in \R$ and $V\in L^q_\loc(M)$, $q>N/2$. 
We study, in particular, the maximal interval such that the Green function of $P_\gl$ is equivalent to  the Green function of $P$, certain classes of `big' and `small' perturbations,  compactness properties of weighted Green operators for certain classes of `small' weights, and a new Liouville comparison principle for nonsymmetric operators. See Section~\ref{sec-AO} for more details.

\medskip

The outline of our paper is as follows. In Section~\ref{sec-pre} we recall some definitions and basic known results concerning criticality theory, and in Section~\ref{sec-AO} we discuss the problems that we study in the present paper. Section~\ref{section_maximal_green} is devoted to our results concerning the equivalence of positive minimal Green functions of second-order elliptic operators under 
nonnegative perturbation. 
In Section~\ref{sec_hbig} we prove that \emph{optimal} Hardy-weights are \emph{h-big} perturbations in the sense of \cite{GH}, 
while in Section~\ref{sec-critical-Hardy} we present a large family of 
\textquoteleft small\textquoteright\, Hardy-weights $W_\gm$, given by a simple explicit formula, such that $P-W_\gm$ is positive-critical. In Section~\ref{sec-torsion} we prove that for symmetric operators, the assumption of finite torsional rigidity implies that the spectrum of $P$ on $L^2(M,\!\dm)$ is discrete.  Section~\ref{Sec_4.1} is devoted to a Liouville comparison principle for 
{\em nonsymmetric}, nonnegative, elliptic operators. We conclude our paper in Section~\ref{sec_green_function_hyperbolic} where we apply perturbation theory to study the asymptotic of the positive minimal Green function of the shifted 
Laplace-Beltrami operator on the hyperbolic space $\mathbb{H}^N$.


\section{Preliminaries}\label{sec-pre}
In the present section we fix our setting and notation, and recall some basic definitions and results concerning criticality theory. 

 Let $M$ be a smooth, connected,  and noncompact  Riemannian manifold of dimension $N$, and $P$ an elliptic operator of the form \eqref{operator}. Throughout the paper we use the following notation.
\begin{itemize}
	\item We denote  by $ \infty $ the ideal point which is added to $ M $ to obtain the one-point compactification of $M$. 
	
	\item  We write $X_1 \Subset X_2$ if the set $X_2$ is open in $M$, the set $\overline{X_1}$ is
	compact and $\overline{X_1} \subset X_2$. 
	
	\item Let $g_1,g_2$ be two positive functions defined in a domain $D$. We say that $g_1$ is {\em equivalent} to $g_2$ in $D$ (and use the notation $g_1\asymp g_2$ in
	$D$) if there exists a positive constant $C$ such
	that
	$$C^{-1}g_{2}(x)\leq g_{1}(x) \leq Cg_{2}(x) \qquad \mbox{ for all } x\in D.$$
	
	\item We fix a {\em compact exhaustion} of $M$, i.e., a sequence of smooth relatively
	compact domains in $M$ such that $M_1 \neq \emptyset,$ $M_j \Subset M_{j + 1}$ and 
	$\cup_{j = 1}^{\infty} M_j = M$. We denote  $M_j^* := M \setminus \overline{M_j}.$
	\item  We denote the restriction of a function $f:M\to \R$ to $A\subset M$ by $f \!\!\upharpoonright_A$.
\end{itemize}
\medskip

We first recall the definitions of critical and subcritical operators and of a ground
 state (for more details on criticality theory, see \cite{MM0,MM1,YP3, YP1, YP2} and references therein).
\begin{defi}\label{groundstate}{\em
Let $K \Subset M$. We say that $u \in \mathcal{C}_{P}(M \setminus K)$ is a {\em positive solution of the
 operator $P$ of minimal growth in a neighborhood of infinity in $M$}, if for
any compact set $K \Subset K_{1} \Subset M$ with a smooth boundary and any positive supersolution $v$ of the equation $Pw=0$
 in $M \setminus K_{1}$,   $v\in C((M \setminus K_{1})\cup \partial K_{1})$, the inequality
$u \leq v$ on $\partial K_{1}$ implies that $u \leq v$ in $M \setminus K_{1}$.

A positive solution $u \in \mathcal{C}_{P}(M)$ which has minimal growth in a neighborhood of infinity in
$M$ is called the \emph{(Agmon) ground state} of $P$ in $M$ (see \cite{AG2}).
}
 \end{defi}
\begin{defi}\label{critical}{\em
The operator $P$ is said to be {\em critical} in $M$ if $P$ admits a ground state in $M$. The operator $P$ is called
 {\em subcritical} in $M$ if  $P\geq 0$ in $M$ but $P$ is
not critical in $M$. If $P \not\geq 0$ in $M$, then $P$ is said to be {\em supercritical} in $M$. 
 }
\end{defi}
If $W\in L^q_\loc(M;\R_+)$ with $q>N/2$ is a nonzero nonnegative potential, then $P- \lambda W$ is subcritical for every $\lambda \in (-\infty, \lambda_0(P, W, M))$, and supercritical for $\lambda > \lambda_0(P, W,M)$. Furthermore, if $P$ is critical in $M$, then $\lambda_0(P, W, M) = 0$. 
\begin{rem}\label{altenatecritical}{\em
Let $P\geq 0$ in $M$. It is well known that the operator $P$ is critical in $M$ if and only if the equation $P u = 0$ in $M$ has 
a unique (up to a multiplicative constant) 
positive supersolution (see \cite{MM1,YP3}). In particular, if $P$ is critical in $M$, then $\dim  \mathcal{C}_{P}(M) = 1$. Further, in the critical case, the unique positive supersolution (up to a multiplicative positive constant) is 
a ground state of $P$ in $M$. 

On the other hand, $P$ is subcritical in $M$ if and only if $P$ admits a (unique) positive minimal Green function 
$G_{P}^{M}(x,y)$ in $M$. Moreover, for any fixed $y\in M$, the function $G_{P}^{M}(\cdot,y)$ is a positive solution of minimal growth
 in a neighborhood of infinity in
$M$.  Since,  $G_{P^\star}^{M}(x,y)=G_{P}^{M}(y,x)$, it follows that $P$ is critical (resp. subcritical) in $M$ if and only if $P^\star$ is critical (resp. subcritical) in $M$.
}
\end{rem}
\begin{rem}\label{altenatecritical1}{\em
In the critical case there exists a (sign-changing) \emph{Green function} which is bounded above by the corresponding ground state away from the singularity, see \cite{DP}. 
}
\end{rem}
\begin{defi}\label{null-critical}{\em
1. We say that $W\gneqq 0$ is a {\em Hardy-weight}  of $P$ in $M$ if $P-W\geq 0$ in $M$.  

\medskip 

2. Assume that $W\gneqq 0$ is a Hardy-weight of $P$ in $M$, and that $P-W$ is critical in $M$. Let $\phi$ and $\phi^\star$ be the ground states of $P-W$ and $P^\star-W$, respectively. The operator $P-W$ is said to be {\em null-critical} (respect., {\em positive-critical}) in $M$ with respect to $W$ if $\phi\phi^\star \not\in L^1(M,W\!\dx)$ (respect., $\phi\phi^\star  \in L^1(M,W\!\dx)$). 
 }
\end{defi}
Fix a potential $V\in L^q_\loc(M;\R)$, where  $q>N/2$. Set $S: = S_+ \cup S_0$, where 
\begin{align*}
S_+: &= S_+ (P, V, M) = \{ t \in \mathbb{R} : P - tV \ \mbox{is subcritical in $M$} \},\\[2mm]
S_0: &= S_0 (P, V, M) = \{ t \in \mathbb{R} : P - tV \ \mbox{is critical in $M$} \}.
\end{align*}
Then $S$ is a closed interval and $S_0 \subset \partial S$ \cite{YP2}. Moreover, if $V$ has compact support in $M$, then  $S_0 = \partial S$. In particular, 
subcriticality is stable under compact perturbation, i.e., if $P$ is subcritical and $V$ is a nonzero potential with compact support in $M$,
then there exists $\varepsilon > 0$ such that $P - \varepsilon V$ is subcritical for $|\varepsilon| < \varepsilon_0$ (see \cite{YP1, YP2}).

The above stability property of subcritical operators and other positivity properties are preserved under a larger (and in fact maximal) class of potentials $V$ called \emph{small perturbations} \cite{YP1}. 
We recall below the definition of small perturbation and other types of perturbations by a potential $V$ and discuss briefly some of their properties. 

\begin{defi}[\cite{MM1,YP1}]
{\rm
Let $P$ be a subcritical operator in $M$ and let $V \in L^{q}_\loc(M)$ for some $q > N/2$ be a real valued potential. We say that $V$
is a \emph{small (semismall) perturbation} of $P$ in $M$ if 

\begin{equation*}\label{defi_small_perturbation}
 \lim_{n \rightarrow \infty} \left\{ \sup_{x, y \in M_n^*} \int_{M_n^*} \dfrac{G^M_P(x, z) |V(z)| G^M_P(z, y)\dm(z)}{G^M_P(x, y)}   \right\} = 0,
\end{equation*}

\medskip

\begin{equation*}\label{defi_semismall_perturbation}
\left(\!\! \lim_{n \rightarrow \infty} \!\!\left\{\! \sup_{ y \in M_n^*}\! \int_{M_n^*} \!\!\!\!\dfrac{G^M_P(x_0, z) |V(z)| G^M_P(z, y)\!\dm(z) }{G^M_P(x_0, y)} \!\!\right\} \!\!=\! 0,\! \mbox{ where } x_0 \in M  \mbox{ is fixed}\!\!\right)\!\!.
\end{equation*}
}
\end{defi}

\begin{defi}\label{g_bounded_defi}
{\rm
We say that $V$ is a {\em $G$-(semi)bounded perturbation} of  $P$ in $M$ if there exists a positive constant $C_0$ such that 

\begin{equation}\label{supremum}
C_0 : = \sup_{x, y \in M}  \int_{M}\frac{ G_{P}^{M}(x, z) |V(z)| G_{P}^{M} (z, y) \dm(z)}{G_{P}^{M}(x, y)} < \infty,
\end{equation}

\medskip

\begin{equation*}\label{defi_bdd_perturbation}
\left( \sup_{ y \in M} \int_{M}\dfrac{G^M_P(x_0, z) |V(z)| G^M_P(z, y)\!\dm(z) }{G^M_P(x_0, y)} <\infty, \mbox{ where } x_0 \in M  \mbox{ is fixed}\!\right)\!\!.
\end{equation*}
}
\end{defi}

\begin{rem}
{\rm
A small perturbation is semismall and $G$-bounded \cite{MM1}. On the other hand, if $V$ is $G$-bounded perturbation of $P$ in $M$, and $f$ is an arbitrary bounded function vanishing at infinity in $\Gw$ (i.e. with respect of the one-point compactification of $M$), then clearly, $fV$ is a small perturbation of $P$ in $M$. 
}
\end{rem}

\begin{defi}{\em 
Let $P_i,$ $i = 1, 2$ be two subcritical operators in $M.$ We say that the Green functions $G^{M}_{P_1}(x, y)$ 
and $G^{M}_{P_2}(x, y)$ are {\em equivalent} (respect., {\em semiequivalent}) if 
$G^{M}_{P_1} \asymp G^{M}_{P_2}$ on $M \times M \setminus \{ (x, x) : x \in M \}$ (respect., if for a fixed  $y\in M$, we have $G^{M}_{P_1}(\cdot,y) \asymp G^{M}_{P_2}(\cdot,y)$ on $M \setminus \{ y\}$).
}
\end{defi}
 In the sequel we use the notation 
\begin{align*}
&E_+ \!= \!E_+(P, V, M) := \{ t \in \mathbb{R} \!\mid\! G^{M}_{P - tV} \asymp G^{M}_{P} 
\quad \mbox{on} \  M \times M \setminus \{ (x, x) : x \in M \} ,\\[2mm]
&SE_+ = SE_+(P, V, M) := \{ t \in \mathbb{R} \!\mid\! G^{M}_{P - tV} \mbox{ is semiequivalent to } G^{M}_{P}  \} .
\end{align*}

\begin{rem}\label{rem_sp}
{\rm
Clearly, $E_+\subseteq S_+$.  It is known that if the operator $P$ is subcritical and $V$ is a small perturbation of $P$ in $M,$ then  $E_+ = S_+$,  $\partial S = S_0$, and the corresponding ground states are equivalent to $G_P^M(x,x_0)$ in $M\setminus B(x_0,\vge)$ for sufficiently small $\vge>0$. 

On the other hand, If $V$ is a $G$-bounded perturbation of $P$ in $M,$ then $G^{M}_{P}\asymp G^{M}_{P- tV}$  on $M \times M \setminus \{ (x, x) : x \in M \}$ provided
$|t|$ is small enough \cite{MM1,YP3,YP1}. 
Furthermore, if $G^{M}_{P}(x, y)$ and $G^{M}_{P- V}(x, y)$ are equivalent and $V$ has a definite sign, then $V$ is a
G-bounded perturbation of $P$
in $M$. Moreover, in this case, $E_+$ is an open half-line which is contained in $S_+\setminus  \{ \lambda_0 \}$ \cite[Corollary~3.6]{YP2}. 
}
\end{rem}

\medskip

Finally, we discuss sufficient conditions for the compactness of the following weighted Green operators  with weight $W\geq 0$. Let 
\begin{equation}\label{weighted Green}
\mathcal{G} \! f(x)\!:= \!\!\! \int_{M}\!\!\!  \Green{M}{P}{x}{y}W(y)f(y)\!\dm(y),\;\; \mathcal{G} ^\odot \!f(y) \!:=\!\!\!
\int_{M}\!\!\!  \Green{M}{P}{x}{y}W(x)f(x)\!\dm(x)
\end{equation}
in certain weighted $L^p$ spaces, where $1\leq p\leq \infty$. Let $\gf$ and $\tilde \gf$ be a pair of two positive continuous functions on $M$, and set
$$L^{p}(\gf_p):=L^{p}(M,(\gf_p)^p\!\dm),\quad  L^{p}(\tilde{\gf}_p):=L^{p}(M,(\tilde{\gf}_p)^p\!\dm),$$
 where 
\begin{equation}\label{eq:2.9}
\gf_p:=\gf^{-1}(\gf
W\tilde{\gf})^{1/p},  \qquad \tilde{\gf}_p:=\tilde{\gf}^{-1}(\gf
W\tilde{\gf})^{1/p}.
\end{equation}
We have
\begin{theorem}[\cite{YP17}]\label{thmcomp}
Let $P$ be a subcritical operator in $M$. Assume that $W>0$ is a semismall perturbation of $P^\star $ and $P$ in $M$, and let $\gl_0:=\gl_0(P,W,M)$. 
 Then 
 \begin{enumerate}
 \item The operator  $P-\gl_0W$ is positive-critical with respect to $W$,  that is, 
\be\label{phiwphi} \int_M
\tilde{\gf}(x)W(x)\gf(x)\dm(x)<\infty,
\end{equation}   
where $\gf$ and $\tilde{\gf}$ denote the ground states of $P-\gl_0 W$
and $P^\star-\gl_0 W$, respectively. Moreover, $\gl_0= \|\mathcal{G}\|_{L^p(\gf_p)}^{-1}> 0$ for any  $1\leq p\leq \infty$.
\item  for any $1\leq
p\leq \infty$, the integral operators
$\mathcal{G}$ and $\mathcal{G} ^\odot$ defined in \eqref{weighted Green}
are compact on $L^{p}(\gf_p)$ and $L^{p}(\tilde{\gf}_p)$,
respectively.

\item For $1\leq p\leq \infty$, the spectrum of $\mathcal{G} \!\!\upharpoonright_{L^{p}(\gf_p)}$ contains $0$, and besides, consists of at most a sequence of eigenvalues of
finite multiplicity which has no point of accumulation except $0$.

\item For any $1\leq p\leq\infty$, $\gf$ (resp.    $\tilde{\gf}$) is the unique nonnegative  eigenfunction of the operator
$\mathcal{G} \!\!\upharpoonright_{L^p(\gf_p)}$ (resp.,
$\mathcal{G} ^\odot\!\!\upharpoonright_{L^p(\tilde{\gf}_p)}$). The corresponding eigenvalue
$\gn=(  \gl_0)^{-1}$ is simple.

\item The spectrum of $\mathcal{G} \!\!\upharpoonright_{L^{p}(\gf_p)}$  is
$p$-independent for all $1\leq p\leq \infty$, and we have
$$0\in \gs\left(\mathcal{G} \!\!\upharpoonright_{L^p(\gf_p)}\right) = \gs\left(\mathcal{G} ^\odot\!\!\upharpoonright_{L^p(\tilde{\gf}_p)}\right)
\subset\overline{ B\Big(0, (  \gl_0)^{-1}\Big)} .$$

\item  Suppose further that $P$ is symmetric. Let  $\phi_k$ be the $k$-th (weighted) eigenfunction in $L^2(M, W\!\dm)$ (counting multiplicity). Then for each $k\geq 1$, the quotient of the eigenfunctions $\phi_k/\phi$ is bounded in $M$ and has a continuous extension up to the Martin boundary of the pair $(M, P)$. 
\end{enumerate}
\end{theorem}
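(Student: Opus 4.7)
The plan is to split the proof into three stages: (i) deriving positive-criticality and the norm identity; (ii) upgrading $\mathcal{G}$ to a compact operator on $L^p(\varphi_p)$ via truncation; (iii) extracting the spectral statements by Riesz--Schauder, Krein--Rutman, duality/interpolation, and (for Claim~(6)) a ground state transformation.

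\textbf{Stage I (Claim (1)).} Criticality of $P-\lambda_0W$ is standard from $\lambda_0=\sup S$ combined with $\partial S=S_0$, which holds under semismallness (Remark~\ref{rem_sp}). Let $\varphi,\tilde\varphi$ be the ground states of $P-\lambda_0W$ and $P^\star-\lambda_0W$. I would first pass to the limit $t\nearrow\lambda_0$ in the Green-function equivalence $G_{P-tW}^M\asymp G_P^M$ and use that a ground state is the unique positive solution of minimal growth at infinity to obtain
\[
\varphi(x)\asymp G_P^M(x,x_0),\qquad \tilde\varphi(x)\asymp G_P^M(x_0,x) \quad \text{on } M\setminus\{x_0\}.
\]
Splitting $\int\varphi\tilde\varphi W\dm$ near and far from $x_0$ (the near part bounded by $L^q_{\loc}$-integrability of $W$, the far part by the semismall tail bounds of $W$ for both $P$ and $P^\star$), one obtains positive-criticality \eqref{phiwphi}. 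Next, $P(\mathcal{G}\varphi)=W\varphi=\lambda_0^{-1}P\varphi$ combined with minimal-growth uniqueness gives $\mathcal{G}\varphi=\lambda_0^{-1}\varphi$ and dually $\mathcal{G}^\odot\tilde\varphi=\lambda_0^{-1}\tilde\varphi$. Since $\int\varphi^p\varphi_p^p\dm=\int\varphi\tilde\varphi W\dm<\infty$, we have $\varphi\in L^p(\varphi_p)$, so $\|\mathcal{G}\|_{L^p(\varphi_p)}\ge\lambda_0^{-1}$. The reverse inequality is a weighted H\"older/Schur test with weights $\varphi,\tilde\varphi$: factor $|f|=\varphi\cdot(|f|/\varphi)$, apply H\"older with exponents $p,p'$ against the measure $G_P^M(x,y)W(y)\varphi(y)\dm(y)$, invoke the first pointwise identity, integrate in $x$ with weight $\varphi_p^p$, apply Fubini, and invoke the second pointwise identity to obtain, after cancellation, $\|\mathcal{G}f\|_{L^p(\varphi_p)}\le\lambda_0^{-1}\|f\|_{L^p(\varphi_p)}$.

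\textbf{Stage II (Claims (2)--(3)).} I would approximate $\mathcal{G}$ by the truncations
\[
\mathcal{G}_n f(x):=\chi_{M_n}(x)\int_{M_n}G_P^M(x,y)\,W(y)\,f(y)\dm(y).
\]
Each $\mathcal{G}_n$ is compact on $L^p(\varphi_p)$ by interior H\"older estimates together with Arzel\`a--Ascoli (the kernel is continuous off the diagonal and the integration domain is relatively compact). The main technical step---and the step I expect to be the principal obstacle---is the norm estimate $\|\mathcal{G}-\mathcal{G}_n\|_{L^p(\varphi_p)\to L^p(\varphi_p)}\to 0$. Repeating the Stage~I H\"older argument on the pieces of the kernel supported outside $M_n\times M_n$ and using the Green-function asymptotics of $\varphi$ and $\tilde\varphi$ from Stage~I to convert the two resulting integrals into the quantities whose suprema define semismallness (one such supremum invoked per H\"older direction, explaining why the hypothesis requires $W$ be semismall for \emph{both} $P$ and $P^\star$), one extracts the required smallness factor. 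Claim~(3) is then Riesz--Schauder applied to the compact operator $\mathcal{G}$.

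\textbf{Stage III (Claims (4)--(6)).} $\mathcal{G}$ is a compact, positive, irreducible operator on $L^p(\varphi_p)$ (positivity and irreducibility follow from strict positivity of $G_P^M$ and the strong maximum principle), and its spectral radius $\lambda_0^{-1}$ is attained by the positive eigenfunction $\varphi$; the strongly positive form of the Krein--Rutman theorem then gives simplicity of $\lambda_0^{-1}$ and uniqueness of $\varphi$ among non-negative eigenfunctions, establishing Claim~(4). For Claim~(5), the explicit identity $\varphi_p\tilde\varphi_{p'}=W$ shows that the bilinear pairing $\langle f,g\rangle:=\int fg\,W\dm$ gives an isometric duality $(L^p(\varphi_p))^*=L^{p'}(\tilde\varphi_{p'})$, and Fubini gives $\mathcal{G}^*=\mathcal{G}^\odot$ under this pairing; the spectra of $\mathcal{G}$ on $L^p(\varphi_p)$ and of $\mathcal{G}^\odot$ on $L^{p'}(\tilde\varphi_{p'})$ therefore coincide. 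Moreover, eigenfunctions of $\mathcal{G}$ are weak solutions of $(P-\mu^{-1}W)u=0$, so by elliptic regularity and the minimal-growth asymptotics of Stage~I they lie in every $L^p(\varphi_p)$ simultaneously, forcing $p$-independence of the spectrum; the inclusion in $\overline{B(0,\lambda_0^{-1})}$ follows from the norm bound of Stage~I. Finally, for (6), the ground state transformation $u\mapsto\varphi u$ turns $P-\lambda_0 W$ into a self-adjoint operator on $L^2(M,\varphi^2 W\dm)$ with $1$ as its ground state; each quotient $\phi_k/\phi$ is then a bounded eigenfunction of the transformed operator ($L^\infty$-boundedness coming from standard estimates for eigenfunctions of positive compact integral operators), and the continuous extension to the Martin boundary of $(M,P)$ follows from Ancona's boundary Harnack principle applied to quotients of positive solutions.
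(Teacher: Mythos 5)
Note first that the paper does not prove this statement at all: Theorem~\ref{thmcomp} is quoted as a known result from \cite{YP17}, so there is no internal proof to compare with, and your proposal has to be measured against that reference. Your Stages I--II and claims (4)--(5) do follow essentially the route taken there: comparability of $\gf$ and $\tilde\gf$ with $G_P^M(\cdot,x_0)$ and $G_P^M(x_0,\cdot)$ under semismallness, positive-criticality by splitting the integral near and far from $x_0$, the weighted Schur test built on the identities $\mathcal{G}\gf=\gl_0^{-1}\gf$ and $\mathcal{G}^\odot\tilde\gf=\gl_0^{-1}\tilde\gf$, compactness by truncation with the semismall tail supplying the operator-norm smallness, Krein--Rutman for (4), and the duality coming from the identity $\gf_p\,\tilde\gf_{p'}=W$ for (5); all of this is sound. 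One step you treat too casually is $\mathcal{G}\gf=\gl_0^{-1}\gf$: from $P(\mathcal{G}\gf-\gl_0^{-1}\gf)=0$ alone nothing follows; you must show that the Green potential of the \emph{non-compactly supported} density $W\gf$ has minimal growth at infinity (equivalently, that the ground state is an invariant solution), and this is precisely where the semismallness of $W$ is used, not a generic minimal-growth uniqueness argument.

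The genuine gap is in your treatment of claim (6). Ancona's boundary Harnack principle is not available in this generality: it requires strong structural hypotheses (Gromov hyperbolicity, uniform domains, or the like), none of which are assumed for an arbitrary noncompact manifold $M$ and an operator of the form \eqref{operator}; likewise, ``standard estimates for eigenfunctions of positive compact integral operators'' do not by themselves yield the pointwise bound $|\phi_k|\leq C_k\phi$. The argument that actually works (the one in \cite{YP17}, and reproduced in this paper as formula \eqref{psineq} in the proof of Lemma~\ref{lem-semismall}) runs as follows: from $\phi_k=\gl_k\,\mathcal{G}(\phi_k)$ and the invariance $\phi=\gl_0\,\mathcal{G}(\phi)$ one first derives $|\phi_k|\leq C_k\phi$ using the semismall bounds; then one writes $\phi_k(x)/\phi(x)$ as a quotient of integrals of $G_P^M(x,z)/G_P^M(x,x_0)$ against $W\phi_k\,\mathrm{d}m$ and $W\phi\,\mathrm{d}m$, lets $x$ tend to a Martin boundary point $\xi$ so that $G_P^M(x,z)/G_P^M(x,x_0)\to K^M_P(z,\xi)$, and uses the semismallness to furnish the uniform domination needed to pass to the limit, which gives the continuous extension as the ratio of Martin-kernel integrals. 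Without replacing your appeal to a boundary Harnack principle by this Martin-representation argument, claim (6) is not established.
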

\begin{remark}{\em 
		We would like to point out that criticality theory, and
		in particular the results of this paper, are also valid for the
		class of {\em classical solutions} of locally uniformly elliptic operators of the form
		\begin{equation}
		\label{L}
		Lu:=-\sum_{i,j=1}^N a^{ij}(x)\partial_{i}\partial_{j}u + b(x)\cdot\nabla u+c(x) u,
		\end{equation}
		with real and locally H\"older continuous coefficients,
		and for the class of {\em strong solutions} of locally uniformly elliptic operators of the form \eqref{L} with locally bounded coefficients (provided that the formal adjoint operator  also satisfies the same assumptions), see \cite{YP3, YP1, YP2,YP5, Pinsky95} and references therein. Nevertheless, for the
		sake of clarity, we prefer to present our results only for operators in divergence form \eqref{operator} and weak solutions.
	}
\end{remark}

\section{Aims and objectives}\label{sec-AO}
In this section we present the problems that we study in our paper. 
\subsection{Maximal interval of equivalence}

The following problem was posed in  \cite[Conjecture~3.7]{YP2}, see also \cite[Example~8.6]{YP5} for a counterexample. 
 \begin{problem}\label{pb_equivalence}
 Suppose that $P$ is subcritical in $M$ of the form \eqref{operator}, and assume that $W \geq 0$ is a $G$-bounded perturbation of $P$ in $M$. Is it true that 
 $$
 E_+ = S_+ \setminus \{ \lambda_0 \}   ?
 $$
 \end{problem}
 In Section~\ref{section_maximal_green} we provide a positive answer to the above question if $P$ is \emph{symmetric} 
  and its positive minimal Green function satisfies the \emph{quasimetric property}. See also Lemma~\ref{lem_2}, where we  prove that $SE_+ = S_+ \setminus \{ \lambda_0 \} $ for a certain family of   
  nonnegative $G$-semibounded perturbations of a subcritical operator $P$ in $M$.
\subsection{$h$-big perturbation}

Next, we discuss a class of perturbations known as {\em $h$-big perturbations}. This notion was introduced by 
A.~Grigor'yan and W.~Hansen \cite{GH} for the case when $P = -\Delta$,  and later it was generalized 
by M.~Murata (see \cite{MM2, MM3}) for elliptic operators of the form \eqref{operator}. 
\begin{defi}\label{h_big_defi}
{\rm Suppose that $P$ of the form \eqref{operator} is subcritical in $M$. Let $h$ be a positive supersolution of the equation 
$$
P \, u = 0 \quad \mbox{in} \ M.
$$
We say that a nonnegative potential $W$ is a {\em $h$-big in $M$} if there is no function  satisfying 
$$
(P + W) v = 0 \quad \mbox{in } M \mbox{ and } 0<v \leq h \quad \mbox{in a neighborohood of infinity in } M.  
$$
 Otherwise, $W$ is said to be {\em non-$h$-big}. 
}
\end{defi}

\begin{rem}
{\rm 
It is evident from the definition of $h$-big perturbation that it generalizes the following Liouville property
for Schr\"odinger equation \cite{AG}: 

Let $M$ be a smooth, noncompact Riemannian manifold $M$ and let $W\neq 0$ be a smooth nonnegative potential 
on $M$. We say that the operator $-\Delta + W$ satisfies the {\em Liouville property} if    
\begin{equation}\label{liouville_laplace}
(-\Delta + W)u = 0 \quad \mbox{in } M, \mbox{ and } 0\leq u\in L^\infty(M), 
\end{equation}
implies $u= 0$.
}
\end{rem}

Clearly (see for example \cite{AG}), if $W\gneqq 0$ has a compact support the above Liouville property holds true if and only if 
$P:=-\Delta$ is critical in $M$ (in other word, $M$ is parabolic). On the other hand, if $P=-\Delta$ is subcritical in $M$ and    
$$
\int_{M} G_P^M(x, y) W(y) \dm(y) < \infty,
$$
then the Liouville property does not hold \cite{AG, GH}. Moreover, it follows from  \cite[Proposition~3.4]{YP5} that if $P$ is subcritical operator in $M$ of the form  \eqref{operator}, and $h\in  \mathcal{C}_P(M)$, then  $W\gneqq 0$ is non-$h$-big if 
$$
\int_{M} G_P^M(x, y) W(y) h(y) \dm(y)  < \infty.
$$

\medskip 

For a given subcritical operator $P$ of the form \eqref{operator} there is a natural class of weights satisfying $\gl_0(P,W,M)>0$, which are `big' in a certain sense.  

\begin{defi}[\cite{DFP}]\label{def-opt-w}
	{\rm 
we say that $W\gneqq 0$ is an  {\em optimal-Hardy} weight for $P$ in $M$ if the following three properties hold: 
\begin{itemize}
\item {\bf Criticality:} $P-W$ is critical in $M$, and let $\vgf$ and $\vgf^*$ be the corresponding ground states of $P-W$ and $P^*-W$.

\item {\bf  Optimality at infinity:} for any $\gl > 1$ and $K\Subset M$,  $P -\gl W\not\geq 0$ in $M\setminus K$.

\item  {\bf  Null-criticality:} $\vgf\vgf^*\not\in L^1(M,W\!\dm)$.

\end{itemize}
}
\end{defi}
The following theorem is a version of \cite[Theorem~4.12]{DFP} (cf.  the discussion therein).  
\begin{thm}\label{thm_DFP}
	Let $P$ be a subcritical operator in $M$ and let $G^{M}_P(x,y)$ be its minimal positive Green function.
	Let $u\in  \mathcal{C}_P(M)$ satisfying 
	
\begin{equation}\label{DFP_cond}
\lim_{x \rightarrow \infty} \frac{G_P^M(x,y)}{u(x)} = 0,
\end{equation}
	where $\infty$ is the ideal point in the one-point compactification of $M$.
	
	 Let $\gf\gneqq 0$ be a compactly supported smooth function, and consider its Green potential 
	 $$G_\gf(x):=\int_{M} G_P^M(x, y) \gf(y) \dm(y).$$
	
	   Then 
	   \begin{equation}\label{W-Hardy}
 W:=\frac{P(\sqrt{G_\gf u})}{\sqrt{G_\gf u}}
	   \end{equation}
	    is an optimal Hardy-weight for $P$ in $M$. Moreover, 
	   	$$
	W(x) := \frac{1}{4} \left| \nabla \log\left( \frac{G_\gf(x)}{u(x)} \right) \right|^2_{A(x)} \qquad \mbox{in } M\setminus \supp{\gf}.
	$$ 
\end{thm}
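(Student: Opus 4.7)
My approach is to verify directly the three defining properties of an optimal Hardy-weight for the candidate $W$, using the ``supersolution construction'' from \cite{DFP}. The starting point is the explicit formula for $W$ outside $\supp\gf$: since $\gf$ has compact support, both $G_\gf$ and $u$ are positive solutions of $Pw=0$ on $M\setminus\supp\gf$, and a direct divergence-form computation (applying the product and chain rules and using $Pv_0=Pv_1=0$ to eliminate the drift and potential terms) yields the identity
\begin{equation*}
\frac{P(\sqrt{v_0 v_1})}{\sqrt{v_0 v_1}} = \tfrac{1}{4}\bigl|\nabla\log(v_0/v_1)\bigr|_A^2
\end{equation*}
for any two positive $P$-solutions $v_0, v_1$ on an open set. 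Applied with $v_0 = u$ and $v_1 = G_\gf$, this gives both the explicit formula in the statement and $W\geq 0$ on $M\setminus\supp\gf$. By the very definition of $W$, the function $\Psi := \sqrt{G_\gf u}$ is a positive solution of $(P-W)w=0$ throughout $M$, so the Allegretto--Piepenbrink principle yields $P-W\geq 0$ in $M$.

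For criticality, I would show that $\Psi$ is a positive solution of $(P-W)w=0$ of minimal growth at infinity, which by Remark~\ref{altenatecritical} is equivalent to criticality. Since $\gf$ has compact support, $G_\gf\asymp G_P^M(\cdot,y_0)$ near infinity for any fixed $y_0\in\supp\gf$, and in particular $G_\gf$ itself is a positive $P$-solution of minimal growth at infinity. Hypothesis~\eqref{DFP_cond} then translates into $G_\gf/u\to 0$, hence $\Psi/u\to 0$, at infinity. Combined with the minimal $P$-growth of $G_\gf$, this decay allows one to compare $\Psi$ against any competing positive $(P-W)$-supersolution via a standard sub/supersolution argument and conclude that $\Psi$ has minimal $(P-W)$-growth; the parallel construction applied to $P^\star$ produces the adjoint ground state $\Psi^\star$.

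For optimality at infinity and null-criticality the central tool is the one-parameter family of ``$h$-transforms'' $\Psi_t := u^{1-t}G_\gf^t$, $t\in(0,1)$. A computation analogous to the $t=1/2$ case yields, on $M\setminus\supp\gf$,
\begin{equation*}
\frac{P\Psi_t}{\Psi_t} = t(1-t)\bigl|\nabla\log(G_\gf/u)\bigr|_A^2 = 4t(1-t)\,W,
\end{equation*}
so each $\Psi_t$ is a positive solution of $(P - 4t(1-t)W)w = 0$ outside $\supp\gf$. Since $\sup_{t\in[0,1]} 4t(1-t) = 1$ is attained only at $t=1/2$, no strictly larger constant can be accommodated: a test-function argument using $\Psi_t$ with $t$ close to $1/2$ and cutoffs supported in $M\setminus K$ shows $P - \lambda W \not\geq 0$ in $M\setminus K$ for every $K\Subset M$ and every $\lambda > 1$. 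Null-criticality is obtained by computing $\Psi\Psi^\star W$ from the explicit formula, reducing $\int_M \Psi\Psi^\star W\,\dm$ to an integral of $|\nabla\log(G_\gf/u)|_A^2$ against $G_\gf u$; since $\log(G_\gf/u)\to -\infty$ at infinity, a coarea/layer-cake argument in this quantity produces the required divergence.

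The main obstacle, I expect, is the criticality step: constructing an actual null-sequence for the quadratic form associated with $P-W$, equivalently controlling the cutoff contributions when truncating $\Psi$ to elements of $C_c^\infty(M)$. The supersolution identity simplifies the bulk terms, but the boundary contributions along an exhaustion $\{M_j\}$ must be shown to vanish, and this is precisely where the decay rate $\Psi/u\to 0$ at infinity enters. A parallel technical issue arises when one tries to realise $\sup_t 4t(1-t) = 1$ asymptotically in the optimality argument, which again rests on careful cutoff estimates near infinity.
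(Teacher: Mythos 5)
The paper itself offers no proof of this theorem: it is stated as a version of \cite[Theorem~4.12]{DFP} and the proof is explicitly omitted as a slight modification of the argument there, so your outline is measured against that argument, whose blueprint (supersolution construction, the family $\Psi_t=u^{1-t}G_\gf^{\,t}$, a change of variables for null-criticality) you correctly reproduce. The algebraic identity $P(v_0^{1-t}v_1^{t})/(v_0^{1-t}v_1^{t})=t(1-t)\,\lvert\nabla\log(v_1/v_0)\rvert^2_{A}$ does survive the drift terms, so the explicit formula and $W\geq 0$ off $\supp\gf$ are fine; note, however, that on $\supp\gf$ the formula is unavailable and you still need $PG_\gf=\gf\geq 0$ together with the supersolution construction to get $W\geq 0$ there, a point you do not address.

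The genuine gap is the criticality step (which you yourself flag as the main obstacle), and the comparison you propose would fail: if $v$ is a positive supersolution of $(P-W)w=0$ near infinity, then since $W\geq 0$ it is a positive $P$-supersolution, so the minimal $P$-growth of $G_\gf$ only gives $G_\gf\leq Cv$; but $\Psi=(G_\gf u)^{1/2}=G_\gf\,(u/G_\gf)^{1/2}$ and $u/G_\gf\to\infty$ by \eqref{DFP_cond}, so domination of $G_\gf$ says nothing about $\Psi$. The mechanism that makes the cited proof work is that $t=1/2$ is a \emph{double root} of $4t(1-t)=1$: after the ground state transform the equation reduces on level sets of $\gs:=G_\gf/u$ to the Euler equation $f''+\tfrac{\gl}{4s^2}f=0$, so near infinity $(P-W)w=0$ admits the second solution $(G_\gf u)^{1/2}\log(u/G_\gf)$, and since $\Psi$ is $o$ of it, \cite[Proposition~6.1]{DFP} yields that $\Psi$ has minimal growth, hence criticality; the same logarithmic corrections (oscillatory exponents for $\gl>1$) are what give optimality at infinity and null-criticality. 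Your observation that $\sup_t 4t(1-t)=1$ only shows that the particular family $\Psi_t$ produces no supersolution of $P-\gl W$ for $\gl>1$; it does not rule out other positive supersolutions near infinity, so it does not prove $P-\gl W\not\geq 0$ in $M\setminus K$. Finally, for nonsymmetric $P$ the claim that ``the parallel construction applied to $P^\star$ produces $\Psi^\star$'' is incorrect: that construction solves $(P^\star-W^\star)w=0$ with a weight $W^\star$ built from the adjoint data, not with the same $W$; the ground state $\vgf^\star$ of $P^\star-W$ exists automatically once $P-W$ is critical (Remark~\ref{altenatecritical}), but the null-criticality integral must be computed with that $\vgf^\star$, which is precisely one of the places where the ``slight modification'' of \cite[Theorem~4.12]{DFP} has to be carried out rather than asserted.
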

We omit the proof of Theorem~\ref{thm_DFP} since it can be obtained by  a slight modification of the proof of  \cite[Theorem~4.12]{DFP}.

In Section~\ref{sec_hbig}, we discuss the following problem.
  \begin{problem}
Study the $h$-bigness property of optimal Hardy-weights $W$  given by Theorem~\ref{thm_DFP}. 
\end{problem}
\subsection{ Critical Hardy-weights}
An important feature of classical Hardy-weights $W$ is the knowledge of the  best Hardy constant. In other words, for such Hardy-weights the value of $\gl_0(P,W,M)$ is known (in contrary to the case of a general weight). 
 We note that the problem of finding a critical potential for a given subcritical operator was studied in \cite[Section~5]{PT06}. The answer obtained there relies on solving a nontrivial auxiliary  variational problem.  Moreover, this variational approach is obviously restricted to {\em symmetric} subcritical operators. 
 
 In Section~\ref{sec-critical-Hardy} we  prove  for any subcritical operator $P$ of the form \eqref{operator},  the existence of a large family of critical Hardy-weights which are given by a simple explicit formula. More precisely, we present a family of \textquoteleft small\textquoteright\, Hardy-weights $W_\gm$ such that each $W_\gm$ is semismall perturbation of $P$ in $M$, and $P-W_\gm$ is positive critical with respect to $W_\gm$. In particular, $\gl_0(P,W_\gm,M)=1$. Recall that {\em optimal} Hardy-weights $W$ given by Theorem~\ref{thm_DFP} are $h$-big and $P-W$ is null-critical with respect to $W$.   
\subsection{ Liouville comparison principle}
Next, we recall a Liouville comparison principle for nonnegative Schr\"odinger-type operators. 
\begin{thm}\cite[Theorem~1.7]{YP07}\label{YP07-thm}
Let $N \geq 1$ and $M$ be a noncompact connected Riemannian manifold.  
Consider two Schr\"odinger operators defined on $M$
of the form \eqref{eq-symm}, that is,
$$
P_j := -\div (A_j \nabla) + V_j \qquad j = 0,1,
$$
such that $A_j$ satisfy \eqref{ell}, and $V_j \in L^{q}_{\loc} (M)$ for some $q > N/2$,  where $j = 0,1$. 

Suppose that the following assumptions hold true:
\begin{enumerate}

\item The operator $P_1$ is critical in $M$. Denote by $\Phi$ be its ground state.

\item $P_0$ is nonnegative in $M$, and there exists a real function $\Psi \in H^{1}_{\loc}(M)$ 
such that $\Psi_{+} \neq 0$, and $P_0 \Psi \leq 0$ in $M$,
    where $u_+(x) := \max \{ 0, u(x) \}.$

\item  The following inequality holds:
\begin{equation*}
(\Psi_+)^2(x) A_0(x) \leq C \Phi^2(x) A_1(x) \qquad  \mbox{ a.e. in } M,
\end{equation*}
where $C > 0$ is a positive constant, and the matrix inequality $A\leq  B$ means
that $B - A$ is a positive semi-definite matrix. 
\end{enumerate}
Then the operator $P_0$ is critical in $M$ and $\Psi$ is its ground state.
\end{thm}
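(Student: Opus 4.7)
The strategy is to build a null sequence for the quadratic form of $P_0$ out of a null sequence for $P_1$, and then invoke the standard null-sequence characterization of criticality. Write $Q_j(f):=\int_M |\nabla f|^2_{A_j}\dm+\int_M V_j f^2\dm$ for the quadratic form of $P_j$, $j=0,1$. Since $P_1$ is critical with ground state $\Phi$, testing $P_1\Phi=0$ against $\Phi\chi^2$ gives the ground-state-transform identity $Q_1(\Phi\chi)=\int_M \Phi^2|\nabla\chi|^2_{A_1}\dm$ for any $\chi\in C_c^\infty(M)$, and criticality of $P_1$ furnishes a sequence $\{\chi_n\}\subset C_c^\infty(M)$ with $0\le\chi_n\le 1$, $\chi_n\to 1$ locally, and $Q_1(\Phi\chi_n)\to 0$.

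The key step is to test the weak inequality $P_0\Psi\le 0$ against the admissible test function $\Psi_+\chi_n^2\in H^1_c(M)$. Using $\nabla\Psi_+=\mathbf{1}_{\{\Psi>0\}}\nabla\Psi$ and $\Psi\,\Psi_+=\Psi_+^2$, the usual Caccioppoli-type expansion yields
\[
Q_0(\Psi_+\chi_n)\;\le\;\int_M \Psi_+^2|\nabla\chi_n|^2_{A_0}\dm,
\]
since the cross and potential terms cancel exactly between the subsolution inequality and the expansion of $|\nabla(\Psi_+\chi_n)|^2_{A_0}$. Invoking the matrix inequality $\Psi_+^2 A_0\le C\Phi^2 A_1$ (applied to the vector $\nabla\chi_n$) bounds the right-hand side by $C\,Q_1(\Phi\chi_n)\to 0$. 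Combined with $Q_0\ge 0$, this exhibits $\{\Psi_+\chi_n\}$ as a null sequence for $Q_0$ whose local limit $\Psi_+$ is not identically zero.

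To finish, argue by contradiction: assume $P_0$ is subcritical. Elliptic regularity applied to the subsolution $\Psi$ supplies a continuous representative, so $\{\Psi>0\}$ is a nonempty open set; choose $W\gneqq 0$ smooth and compactly supported in $\{\Psi>0\}$. Subcriticality provides $\varepsilon>0$ with $Q_0(\varphi)\ge \varepsilon\int_M W\varphi^2\dm$ for every $\varphi\in H^1_c(M)$. Taking $\varphi=\Psi_+\chi_n$ and passing to the limit (dominated convergence on $\supp W$) gives $0\ge\varepsilon\int_M W\Psi_+^2\dm>0$, a contradiction. Hence $P_0$ is critical. By the uniqueness (up to scalar multiples) of positive supersolutions in the critical case, the null-sequence limit $\Psi_+$ must coincide with a positive multiple of the ground state of $P_0$; in particular $\Psi_+>0$ everywhere in $M$, so $\Psi=\Psi_+$ solves $P_0\Psi=0$ and is the ground state. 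The most delicate points are the chain-rule manipulation for $\Psi_+\in H^1_{\loc}$ (routine but requires the regularity assumptions on $V_0$) and the identification of the null-sequence limit with the ground state, both of which are standard within criticality theory.
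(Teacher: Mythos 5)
The paper itself does not prove this statement: it is quoted verbatim as \cite[Theorem~1.7]{YP07}, so there is no in-paper proof to compare against. Your argument is essentially the original one from that reference: transplant a null sequence of the critical operator $P_1$, via the ground-state transform and the pointwise matrix inequality $(\Psi_+)^2A_0\le C\Phi^2A_1$, into a null sequence $\Psi_+\chi_n$ for $Q_0$, and invoke the null-sequence (ground-state alternative) characterization of criticality; this is correct. Two small points to tidy: a weak subsolution need not admit a continuous representative under these hypotheses, but you do not need openness of $\{\Psi>0\}$ --- it suffices to pick a ball $B$ with $\int_B\Psi_+^2>0$ and a bump $W$ positive on $B$, using stability of subcriticality under compactly supported perturbations; and the identification of $\Psi_+$ with the ground state should be justified by the standard fact that the $L^2_{\loc}$-limit of a null sequence is (a multiple of) the ground state, rather than by uniqueness of positive supersolutions, since $\Psi_+$ is a priori only a subsolution.
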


\medskip

We note that in Theorem~\ref{YP07-thm}  there is no assumption on the difference of the given potentials $V_j$. In \cite[Problem~5]{YP07} the author proposed to  generalize Theorem~\ref{YP07-thm} to the case of {\em nonsymmetric} elliptic operators of the form \eqref{operator} with the same (or even with comparable) principal parts. In a recent paper \cite{ABG},  the authors gave a partial answer to the above problem using  a probabilistic approach along with criticality theory under some assumptions on the difference of the given potentials.  

In Section~\ref{Sec_4.1}, we prove another version of Liouville comparison principle for nonsymmetric nonnegative operators. In particular, we provide a quantitative bound on the difference of the given potentials in terms 
of a certain Hardy-weight to guarantee  the validity of a Liouville comparison principle. Moreover, in contrast to  \cite[Theorem~2.3]{ABG} which holds in $\R^N$,  our result holds in any noncompact Riemannian manifold. We refer to Theorem~\ref{liouville_critical_thm} for more details. 
 \section{Maximal interval of equivalence of Green functions}\label{sec_equivalence}\label{section_maximal_green}
 In the present section we provide a partial answer to Problem~\ref{pb_equivalence} concerning $G$-bounded perturbations under the quasimetric assumption.  
 This property of Green functions has been considered previously by several authors, for 
 example in \cite{FNV, KV, YP5}. 
\begin{defi}\label{def-qk}{\em 
A {\em quasimetric kernel} $K$ on a measure space $(M, \mu)$ is a measurable function from 
$M \times M \rightarrow (0, \infty]$ such that the following conditions hold.
\begin{enumerate}
\item The kernel $K$ is symmetric : $K(x, y) = K(y, x)$ for all  $ x, y \in M. $

\item The function $d := 1/K$ satisfies the quasi-triangle inequality
\begin{equation}\label{quasimetric}
d(x, y) \leq C(d(x, z) + d(z, y))\qquad \forall x, y, z \in M,
\end{equation}
 for some $C > 0$, called the {\em quasimetric constant} for $K$. 
\end{enumerate}
}
\end{defi}
\begin{remark}\label{rem-quasi}
	{\em
	Using Ptolemy inequality \cite[Lemma~2.2]{FNV}, it follows that if $G_{P}^{M}$ is a quasimetric kernel in the sense of Definition~\ref{def-qk}, then it satisfies the quasimetric inequality of \cite[Lemma~7.1]{YP5}. Therefore, in this case and in light of \cite[Lemma~7.1]{YP5}, if $W$ is $G$-semibounded perturbation, then $W$ is in fact, $G$-bounded perturbation.     
 }
\end{remark}

We are now in a position to state the main result of the present section.  We have

\begin{thm}\label{main-thm}
Let $P$ be a second-order, symmetric, subcritical elliptic operator of the form \eqref{symm_P} defined 
on noncompact Riemannian manifold $M$, and let $0\lneqq W\in L^q_\loc(M;\R)$, with  $q>N/2$ be a $G$-semibounded perturbation of $P$ in $M$.

 Assume further that $G_{P}^{M}$ is a quasimetric kernel. Then
 $$
 G^{M}_{P} \asymp G^{M}_{P - \varepsilon W}  \qquad \mbox{on}  \ M \times M
 $$
  for all $\varepsilon < \lambda_{0}=\lambda_{0}(P,W,M).$ Moreover,  
  $$E_+=S_+ \setminus \{ \lambda_0 \}  .$$ 
\end{thm}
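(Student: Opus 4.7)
The first reduction is to notice that by Remark~\ref{rem-quasi}, the quasimetric assumption on $G^M_P$ upgrades the $G$-semiboundedness of $W$ to full $G$-boundedness for $P$ in $M$. Since $W\gneqq 0$ has a definite sign, the last part of Remark~\ref{rem_sp} then implies that $E_+$ is an open half-line $E_+=(-\infty,\varepsilon^*)$ with $\varepsilon^*\le\gl_0$, and the theorem reduces to showing $\varepsilon^*=\gl_0$. I would argue by contradiction: assuming $\varepsilon^*<\gl_0$, the operator $P-\varepsilon^*W$ is subcritical, so by the monotonicity $\varepsilon\mapsto G^M_{P-\varepsilon W}$ (valid since $W\ge 0$) one has $G^M_{P-\varepsilon W}\nearrow G^M_{P-\varepsilon^*W}<\infty$ pointwise as $\varepsilon\nearrow\varepsilon^*$, and the lower bound $G^M_P\le G^M_{P-\varepsilon^*W}$ is automatic. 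The task thus reduces to producing a constant $C>0$ with $G^M_{P-\varepsilon W}\le C\,G^M_P$ uniformly in $\varepsilon\in[0,\varepsilon^*]$.

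The main step is to iterate the resolvent identity
\[
G^M_{P-\varepsilon W}(x,y)=G^M_P(x,y)+\varepsilon\int_M G^M_P(x,z)\,W(z)\,G^M_{P-\varepsilon W}(z,y)\dm(z),
\]
and to control the resulting Neumann series using the 3G-inequality
\[
\frac{G^M_P(x,z)\,G^M_P(z,y)}{G^M_P(x,y)}\le C\bigl(G^M_P(x,z)+G^M_P(z,y)\bigr)
\]
furnished by the quasimetric property. A naive use of $G$-boundedness alone bounds the $n$-th iterate by $C_0^n\,G^M_P(x,y)$, which only yields convergence for $\varepsilon<1/C_0$ --- possibly strictly less than $\gl_0$. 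To reach the sharp threshold I would exploit the symmetry of $P$ to view the iteration as powers of the self-adjoint weighted Green operator $\mathcal{G}_W\colon f\mapsto\int G^M_P(\cdot,z)W(z)f(z)\dm(z)$, whose spectral radius is $\gl_0^{-1}$ by Theorem~\ref{thmcomp}. The combination of the 3G-inequality (which reduces each iterated pointwise kernel to a sum of operator iterates acting on $G^M_P(\cdot,y)$ or $G^M_P(x,\cdot)$) together with self-adjointness (which supplies sharp operator-norm bounds in the natural weighted $L^2$-space) should promote the operator bound to a uniform pointwise estimate, yielding $G^M_{P-\varepsilon^*W}\asymp G^M_P$.

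This places $\varepsilon^*$ inside $E_+$, contradicting its openness; hence $\varepsilon^*=\gl_0$ and $E_+=S_+\setminus\{\gl_0\}$. The principal obstacle is exactly the passage from the naive Neumann-series estimate (valid only on $(-\infty,1/C_0)$) to a pointwise bound valid up to $\gl_0$: one must combine, in a single estimate, the quasimetric/3G-inequality with the self-adjoint spectral structure provided by the symmetry of $P$. I would first test the approach in the prototype case $P=-\Delta$ on $\R^N$ with $W$ a classical small weight, where the 3G-inequality admits a direct probabilistic/potential-theoretic interpretation, and then transfer the estimate to the general symmetric elliptic setting via the equivalence of $G^M_P$ with a suitably modeled kernel.
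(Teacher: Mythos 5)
Your overall strategy mirrors the paper's: reduce to a $G$-bounded perturbation via Remark~\ref{rem-quasi}, note that $E_+$ is an open half-line, and then push the equivalence from the naive Neumann-series threshold $1/C_0$ all the way up to $\gl_0$ by combining the quasimetric structure of $G_P^M$ with an operator-norm bound strictly less than $1$ for the weighted Green operator on $L^2(M,W\!\dm)$. However, the decisive step is missing: you write that the 3G-inequality together with self-adjointness ``should promote the operator bound to a uniform pointwise estimate,'' but this promotion is exactly the hard part, and you neither prove it nor cite a result that does. The paper closes this gap by invoking the Frazier--Nazarov--Verbitsky theorem \cite[Theorem~1.1]{FNV}: for a quasimetric kernel whose associated integral operator on $L^2(M,\dW)$ has norm $<1$, the sum $H$ of the Neumann series satisfies the two-sided pointwise bound
\begin{equation*}
\mathrm{e}^{C_1K^{(1)}/K^{(0)}}K^{(0)}\;\leq\; H\;\leq\;\mathrm{e}^{C_2K^{(1)}/K^{(0)}}K^{(0)},
\end{equation*}
and then $K^{(1)}/K^{(0)}\leq C_0$ (the $G$-boundedness) turns this into $G^M_{P-(E_0+\gd)W}\asymp G^M_P$, contradicting the maximality of $E_0=\sup E_+$. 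Without this theorem (or an equivalent argument), your plan stalls precisely at the obstacle you yourself identify.

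A second, concrete flaw: you justify the claim that the spectral radius of $\mathcal{G}_W$ on the weighted $L^2$-space is $\gl_0^{-1}$ by appealing to Theorem~\ref{thmcomp}, but that theorem assumes $W$ is a \emph{semismall} perturbation of $P$ and $P^\star$, which is not among the hypotheses here ($W$ is only $G$-semibounded). The paper obtains the needed norm bound $\|T\|_{L^2(M,\dW)}\leq (E_0+\gd)/\gl_0<1$ elementarily, by a Schur test applied with a positive supersolution $u$ of $(P-\gl_0W)u=0$ (using $\int_M G_P^M(x,y)u(y)\dW(y)\leq u(x)/\gl_0$ and its adjoint analogue), which also explains why symmetry of $P$ enters: it makes $G_P^M$ a symmetric kernel, as required both for the Schur test in this form and for the Frazier--Nazarov--Verbitsky estimate. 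You should replace the appeal to Theorem~\ref{thmcomp} by such a Schur-test argument and supply the pointwise comparison theorem for quasimetric kernels; with those two ingredients your outline becomes essentially the paper's proof.
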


Before proving Theorem~\ref{main-thm}, we recall some general results concerning the equivalence of Green functions.
We start with the following lemma.
 \begin{lem}[\cite{MM1,YP3,YP1}]\label{interval_equivalence}
Let $P$ be a second-order, subcritical elliptic operator of the form \eqref{operator} defined on noncompact Riemannian 
manifold $M$, and let $V\in L^q_\loc(M;\R)$  with  $q>N/2$ be a  $G$-bounded perturbation (that is, the $3G$-inequality \eqref{supremum} holds true).

 Then $P - \varepsilon V$ is subcritical and
\begin{equation}
G^{M}_{P} \asymp G^{M}_{P - \varepsilon V} \qquad \mbox{on}  \ M \times M
\end{equation}
for all $|\varepsilon| < (2C_0)^{-1}$. In particular,  $\lambda_{0}: = \lambda_{0}(P,V, M) > 0$.
\end{lem}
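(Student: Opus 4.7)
\medskip

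\noindent\textbf{Proof proposal.} The plan is to construct $G^M_{P-\varepsilon V}$ as an explicit Neumann series in iterated kernels of $G^M_P$ and $V$, and to use the $3G$-hypothesis \eqref{supremum} iteratively to obtain absolute convergence together with the desired equivalence. Set $G_0:=G^M_P$ and, for $k\geq 1$, define
$$
I_k(x,y):=\int_{M}\!\!\cdots\!\!\int_{M} G_0(x,z_1)V(z_1)G_0(z_1,z_2)V(z_2)\cdots V(z_k)G_0(z_k,y)\dm(z_1)\cdots\dm(z_k),
$$
with $I_0:=G_0$. Applying \eqref{supremum} with $|V|$ in the innermost variable and then peeling off one integration at a time, an easy induction yields
$$
|I_k(x,y)|\leq C_0^k\,G_0(x,y)\qquad \text{on } M\times M\setminus\{(x,x):x\in M\}.
$$
Consequently, for any $\varepsilon\in\R$ with $|\varepsilon|C_0<1$, the formal series $G_{\varepsilon}(x,y):=\sum_{k=0}^{\infty}\varepsilon^k I_k(x,y)$ converges absolutely and satisfies
$$
|G_{\varepsilon}(x,y)-G_0(x,y)|\leq \frac{|\varepsilon|C_0}{1-|\varepsilon|C_0}\,G_0(x,y).
$$
When $|\varepsilon|<(2C_0)^{-1}$ the factor $\tfrac{|\varepsilon|C_0}{1-|\varepsilon|C_0}$ is strictly less than $1$, so $G_{\varepsilon}\asymp G_0$ on $M\times M$ (away from the diagonal), which is the stated equivalence of Green functions.

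Next I would check that $G_{\varepsilon}$ is in fact the minimal positive Green function of $P-\varepsilon V$ in $M$. Termwise rearrangement (justified by the $3G$-domination through dominated convergence) shows that $G_{\varepsilon}$ satisfies the resolvent identity
$$
G_{\varepsilon}(x,y)=G_0(x,y)+\varepsilon\int_{M} G_0(x,z)V(z)G_{\varepsilon}(z,y)\dm(z),
$$
so that, applying $P$ in the distributional sense in $x$, one obtains $(P-\varepsilon V)G_{\varepsilon}(\cdot,y)=\delta_y$. To identify $G_{\varepsilon}$ with the \emph{minimal} positive Green function, I would first carry out the whole construction on each relatively compact domain $M_n$ of the fixed compact exhaustion: there $G^{M_n}_{P-\varepsilon V}$ exists by standard elliptic theory with zero Dirichlet data, the same Neumann expansion (with $G_0$ replaced by $G^{M_n}_P\leq G^M_P$) gives precisely the Green function of the truncated problem, and the two-sided bound $c G^{M_n}_P\leq G^{M_n}_{P-\varepsilon V}\leq C G^{M_n}_P$ holds with constants independent of $n$. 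Passing to the limit $n\to\infty$ (using monotone convergence of $G^{M_n}_P\uparrow G^M_P$ and the uniform bounds from the series) yields a positive minimal Green function for $P-\varepsilon V$ in $M$, equal to $G_{\varepsilon}$ and equivalent to $G_0$. The existence of such a Green function is equivalent to subcriticality of $P-\varepsilon V$ in $M$, so $P-\varepsilon V$ is subcritical for every $\varepsilon$ with $|\varepsilon|<(2C_0)^{-1}$. Taking $\varepsilon\in\bigl(0,(2C_0)^{-1}\bigr)$ in the definition of $\gl_0$, we conclude $\gl_0(P,V,M)\geq (2C_0)^{-1}>0$.

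The routine portion is the iteration of \eqref{supremum} and the geometric-series bookkeeping. The main obstacle is the final identification step: showing that the series-constructed $G_{\varepsilon}$ is not merely \emph{a} positive solution of $(P-\varepsilon V)u=\delta_y$ but the \emph{minimal} one. The compact-exhaustion device above circumvents this, because on bounded domains the Dirichlet Green function is uniquely characterized and the Neumann construction matches it by inspection; then the minimal Green function on $M$ is recovered as the monotone limit of $G^{M_n}_{P-\varepsilon V}$, and the uniform equivalence $G^{M_n}_{P-\varepsilon V}\asymp G^{M_n}_P$ survives the limit.
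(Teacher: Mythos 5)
Your first half --- the iterated kernels, the inductive $3G$ bound $|I_k|\leq C_0^k\,G^M_P$, and the geometric-series bookkeeping giving convergence for $|\varepsilon|C_0<1$ and a two-sided bound for $|\varepsilon|<(2C_0)^{-1}$ --- is exactly the paper's argument. Where you diverge is in how the bounds are transferred to the \emph{minimal} Green function. The paper never identifies the Neumann sum with $G^M_{P-\varepsilon V}$: it uses the sum only as \emph{a} positive Green function, deduces from it the existence of $G^M_{P-|\varepsilon||V|}$ and, by minimality and the generalized maximum principle, the upper bound $G^M_{P-\varepsilon V}\leq G^M_{P-|\varepsilon||V|}\leq (1-|\varepsilon|C_0)^{-1}G^M_P$; the lower bound is then obtained from the resolvent equation $G^M_{P-\varepsilon V}(x,y)=G^M_P(x,y)+\varepsilon\int_M G_{P-\varepsilon V}(x,z)V(z)G^M_P(z,y)\dm(z)$, whose integral term is controlled by the already-proved upper bound together with \eqref{supremum}, yielding $\frac{1-2|\varepsilon|C_0}{1-|\varepsilon|C_0}\,G^M_P\leq G^M_{P-\varepsilon V}$. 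This sidesteps the minimality identification that your exhaustion argument is designed to supply.

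Two points in your identification step need repair. First, the existence of $G^{M_n}_{P-\varepsilon V}$ is not ``standard elliptic theory'' on its own: you need $P-\varepsilon V\geq 0$ there, which does follow from your construction (the positive sum $G_\varepsilon(\cdot,y)$ is a positive supersolution of $(P-\varepsilon V)u=0$ in $M$), but must be invoked. Second, and more seriously, your claim that $c\,G^{M_n}_P\leq G^{M_n}_{P-\varepsilon V}\leq C\,G^{M_n}_P$ holds with constants independent of $n$ does not follow from \eqref{supremum}: the relevant $3G$ constant for $M_n$ has $G^{M_n}_P$ in the denominator, which vanishes on $\partial M_n$ and is not comparable there to $G^M_P$, so the hypothesis gives no control of it, and the uniform lower bound is unjustified as stated. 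Fortunately it is also unnecessary: it suffices to show that $G^{M_n}_{P-\varepsilon V}$ equals the Neumann series built from $G^{M_n}_P$ (the remainder in the iterated resolvent identity is controlled by the a priori bound $G^{M_n}_{P-\varepsilon V}\leq G^{M_n}_{P-|\varepsilon||V|}\leq (1-|\varepsilon|C_0)^{-1}G^M_P$), let $n\to\infty$ to conclude $G^M_{P-\varepsilon V}=G_\varepsilon$, and then read both bounds off the series estimate on $M$ --- or simply adopt the paper's resolvent-equation shortcut for the lower bound. With that correction your argument is sound and gives the same constants as the paper.
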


\begin{proof}
Consider the iterated Green kernel
\begin{equation}\label{ik2}
G^{(i)}_{P}(x, y) := \left\{
                       \begin{array}{ll}
                         G^{M}_{P}(x, y) & i=0, \\[4mm]
                        \int_{M} G(x, z) V(z) G^{(i-1)}_{P}(z, y) \dm(z) & i\geq 1.
                       \end{array}
                     \right.
\end{equation}
Then it follows from the hypothesis and an induction argument that
$$
|G^{(i)}_{P}(x, y)| \leq (C_0)^{i} G^{M}_{P}(x, y),
$$
where $C_0$ is given by \eqref{supremum}. Hence,
\begin{equation*}\label{first_estimate}
\sum_{i = 0}^{\infty} |\varepsilon|^{i} \left|G^{(i)}_{P}(x, y) \right| \leq \frac{1}{1 - C_0 |\varepsilon|} G^{M}_{P}(x, y),
\end{equation*}
provided $|\vge|<C_0^{-1}$. Fix  $|\vge|<C_0^{-1}$. Using a standard elliptic argument, it follow that the Neumann series
$$
H^{\varepsilon}_{P}(x, y) := \sum_{i = 0}^{\infty}  {\varepsilon}^{i} G^{(i)}_{P}(x, y)
$$
converges locally uniformly in $M$ to a Green function of $(P- \varepsilon V)u = 0.$  Moreover, for  $|\vge|<C_0^{-1}$, the positive minimal Green function $G^{M}_{P - \varepsilon |V|}$ exists, and by 
the minimality of the Green function it satisfies 
\begin{equation*}\label{upper_bound3}
0 \leq G^{M}_{P - |\varepsilon| |V|} (x, y) \leq \frac{1}{1 - |\varepsilon|C_0 } G^{M}_{P}(x, y).
\end{equation*}
Hence, $G^{M}_{P - \varepsilon V}$ exists, and by the generalized maximum  principle  we obtain
\begin{equation}\label{upper_bound}
0\leq G^{M}_{P - \varepsilon V} (x, y) \leq G^{M}_{P -| \varepsilon| |V|} (x, y) \leq \frac{1}{1 - |\varepsilon|C_0 } G^{M}_{P}(x, y).
\end{equation}
Using resolvent equation \cite[Lemma~2.4]{YP1}
$$G^{M}_{P - \varepsilon V} (x, y)= G^{M}_{P} (x, y)+ \varepsilon\int_{M} G_{P - \varepsilon V}(x, z)  V(z) G^{M}_{P}(z, y) \dm(z),$$
we obtain 
$$G^{M}_{P} (x, y)\leq G^{M}_{P - \varepsilon V} (x, y) + \frac{|\varepsilon| C_0}{1 - |\varepsilon|C_0} G^{M}_{P} (x, y). $$
Hence, for $|\varepsilon| < (2C_0)^{-1}$ we have
$$
\frac{1 - 2|\varepsilon|C_0 }{1 - |\varepsilon|C_0 } G^{M}_{P}(x, y)\leq G^{M}_{P - \varepsilon V} (x, y).
$$
Hence, the lemma follows.
\end{proof}
We recall a lemma regarding the convergence of the Neumann series of the iterated Green functions in the case of a perturbation by a potential $W$ with a definite sign.
\begin{lem}[Lemma~3.1, \cite{YP5}] \label{conv}
Let $P$ be a second-order, subcritical elliptic operator of the form \eqref{operator} defined on noncompact Riemannian 
manifold $M$, and let $W\in L^q_\loc(M;\R)$, with  $q>N/2$ be a nonzero, nonnegative potential 
such that $\lambda_{0}: = \lambda_{0}(P,V, M) > 0$.
Then
\item
\begin{equation}\label{c1}
\int_{M} G_{P}^{M} (x, z) W(z) G_{P}^{M}(z, y)\dm(z) < \infty,
\end{equation}
and for every $0 < \varepsilon < \lambda_{0}$, the Neumann series $\sum_{i = 0}^{\infty} \varepsilon^{i} G_{P}^{(i)}(x, y)$ converges to 
 $G_{P - \varepsilon W}^{M}(x, y)$
in the compact-open topology.
\end{lem}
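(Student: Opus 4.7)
The plan is to first establish the integrability \eqref{c1} via the resolvent identity, then construct the Neumann series by iterating that identity, and finally identify the limit with $G_{P-\varepsilon W}^M$ by minimality, upgrading pointwise to compact-open convergence by Dini's theorem. Since $0<\varepsilon<\lambda_0$, the operator $P-\varepsilon W$ is subcritical, hence $G_{P-\varepsilon W}^M$ exists and is finite off the diagonal. Because $W\geq 0$ and $PG_{P-\varepsilon W}^M(\cdot,y)=\delta_y+\varepsilon W G_{P-\varepsilon W}^M(\cdot,y)\geq\delta_y$, the function $G_{P-\varepsilon W}^M(\cdot,y)$ is a positive supersolution of $Pu=0$, so the minimality of $G_P^M$ yields the pointwise bound $G_P^M\leq G_{P-\varepsilon W}^M$. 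I would then invoke the resolvent identity from \cite{YP1} (the same identity used in the proof of Lemma~\ref{interval_equivalence}),
\begin{equation*}
G_{P-\varepsilon W}^M(x,y) = G_P^M(x,y) + \varepsilon \int_M G_{P-\varepsilon W}^M(x,z)\,W(z)\,G_P^M(z,y)\dm(z),
\end{equation*}
and using $G_P^M\leq G_{P-\varepsilon W}^M$ deduce
\begin{equation*}
\int_M G_P^M(x,z)\,W(z)\,G_P^M(z,y)\dm(z) \leq \varepsilon^{-1}\bigl[G_{P-\varepsilon W}^M(x,y)-G_P^M(x,y)\bigr] < \infty,
\end{equation*}
establishing \eqref{c1}.

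For the Neumann series, I would iterate the companion form $G_{P-\varepsilon W}^M(x,y) = G_P^M(x,y) + \varepsilon \int_M G_P^M(x,z)\,W(z)\,G_{P-\varepsilon W}^M(z,y)\dm(z)$ a total of $n+1$ times, with every interchange justified by Tonelli (all integrands are nonnegative), to obtain
\begin{equation*}
G_{P-\varepsilon W}^M(x,y) = \sum_{j=0}^n \varepsilon^j G_P^{(j)}(x,y) + \varepsilon^{n+1} R_{n+1}(x,y),
\end{equation*}
where $R_{n+1}(x,y) := \int_M G_P^{(n)}(x,z)\,W(z)\,G_{P-\varepsilon W}^M(z,y)\dm(z) \geq 0$. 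Hence the partial sums $S_n := \sum_{j=0}^n \varepsilon^j G_P^{(j)}$ increase in $n$ and are pointwise dominated by $G_{P-\varepsilon W}^M$, so the series converges pointwise to a limit $H^\varepsilon \leq G_{P-\varepsilon W}^M$.

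To identify $H^\varepsilon$ with $G_{P-\varepsilon W}^M$, I would verify that $H^\varepsilon(\cdot,y)$ solves $(P-\varepsilon W)u=\delta_y$ distributionally. Using $PG_P^M(\cdot,y)=\delta_y$ and $PG_P^{(j)}(\cdot,y)=W\,G_P^{(j-1)}(\cdot,y)$ for $j\geq 1$, a formal term-by-term computation yields $PH^\varepsilon(\cdot,y)=\delta_y+\varepsilon W H^\varepsilon(\cdot,y)$; the interchange of summation with the pairing against $\phi\in C_c^\infty(M)$ is justified by monotone convergence. Minimality of $G_{P-\varepsilon W}^M$ then forces $G_{P-\varepsilon W}^M \leq H^\varepsilon$, and combining with the reverse inequality yields equality. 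To promote pointwise to compact-open convergence off the diagonal, I note that each $S_n$ is continuous on $(M\times M)\setminus\{(x,x):x\in M\}$ by standard elliptic regularity, the sequence is monotone increasing, and the limit $G_{P-\varepsilon W}^M$ is continuous there; Dini's theorem then delivers uniform convergence on compact subsets off the diagonal. The main technical obstacle is this final identification step: rigorously passing the elliptic operator under the monotone limit of $S_n$ requires either an $L^1_\loc$ monotone convergence argument in the duality pairings or uniform local $L^p$ bounds on $S_n$ via elliptic regularity, with care taken near the singularity at $y$ where $G_P^M(\cdot,y)$ blows up.
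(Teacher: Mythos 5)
This lemma is not proved in the paper at all: it is quoted from \cite[Lemma~3.1]{YP5}, so there is no in-paper argument to compare against. Your proposal is essentially the standard proof from that source: subcriticality of $P-\varepsilon W$ for $0<\varepsilon<\lambda_0$, the resolvent equation of \cite[Lemma~2.4]{YP1} (the same tool the paper invokes in the proof of Lemma~\ref{interval_equivalence}) yielding \eqref{c1} and the domination of the partial sums $S_n$ by $G^M_{P-\varepsilon W}$, identification of the monotone limit with the Green function via minimality, and Dini's theorem to upgrade to locally uniform convergence off the diagonal. The only step that needs genuine care is exactly the one you flag: passing the divergence-form operator under the monotone limit cannot be done by $L^1_{\loc}$ convergence of $S_n$ alone, since the weak formulation pairs $\nabla S_n$ against $A\nabla\phi$ and $A$ is merely measurable and locally bounded, so you need local $W^{1,2}_{\loc}$ bounds on $S_n$ (e.g.\ Caccioppoli estimates, using that away from the pole the $S_n$ are uniformly dominated by $G^M_{P-\varepsilon W}(\cdot,y)$ and solve equations with uniformly controlled right-hand sides $\varepsilon W S_{n-1}$), and then pass to the limit by weak compactness. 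A cleaner route to the same identification, which is the one implicit in \cite{YP5}, is to run the whole argument on a compact exhaustion $\{M_j\}$, where the Neumann series identity for $G^{M_j}_{P-\varepsilon W}$ is already available, and then interchange the two monotone limits (in $j$ and in the number of terms) by monotone convergence, using $G^{M_j}_{P-\varepsilon W}\uparrow G^{M}_{P-\varepsilon W}$. Either way the argument closes, so I see no genuine gap in your proposal, only this technical point to be written out.
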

\begin{proof}[Proof of Theorem~\ref{main-thm}]
	In light of Remark~\ref{rem-quasi} we may assume that $W$ is a $G$-bounded perturbation. 
	
Clearly, $E_+$ is an open set.	Indeed, if $\gl\in E_+$, then $W$ is $G$-bounded perturbation of $P-\gl W$, and by Lemma~\ref{interval_equivalence}, there exists $\vge_0>0$ such that 
   $(\gl-\vge_0,\gl+\vge_0)\subset E_+$  (see also \cite[Corollary~3.6]{YP2}). In particular,  $\gl_0\not \in E_+$.  
\medskip

Next, We claim that  $G^{M}_{P} \asymp G^{M}_{P - \varepsilon W}$ for all $\varepsilon <C_0^{-1}$.	

It follows from Lemma~\ref{interval_equivalence} that $G^{M}_{P} \asymp G^{M}_{P - \varepsilon W}$ for all $|\varepsilon| < (2C_0)^{-1}$. 
Moreover, by the generalized maximum principle, 
if $\varepsilon_1 < \varepsilon_2, $ then

\begin{equation}\label{g_maximum}
G^{M}_{P- \varepsilon_1 W} \leq G^{M}_{P- \varepsilon_2 W}.
\end{equation}
Therefore, $G^{M}_{P} \leq G^{M}_{P - \varepsilon W}$ for all $ 0\leq\varepsilon < \gl_0$. On the other hand, for  $0<\varepsilon<\frac{1}{C_{0}}$, 
we have  by \eqref{upper_bound} that \begin{equation}\label{eq177}
   G^{M}_{P}\leq  G^{M}_{P - \varepsilon W}\leq \frac{1}{1-\vge C_0}G^{M}_{P}.
\end{equation}

Fix $\varepsilon > 0$, and let 
$$G_0: =G^{M}_{P + \varepsilon W},\qquad  G_1: =G^{M}_{P - \frac{W}{2C_0}}, \qquad \ga:=\frac{\vge}{\vge+1/(2C_0)}\,.$$ 
In light of \cite[Theorem~{3.4}]{YP2} 
and \eqref{eq177}, we obtain
$$
 G_0=G^{M}_{P+\vge W}\leq G^{M}_{P} \leq (G_{1})^{\ga}(G_{0})^{1-\ga} \leq 2^{\ga} (G^{M}_{P})^{\ga}G_{0}^{1-\ga}.
$$
Therefore, 
$$G_{P+ \vge W}\leq G^{M}_{P}\leq 2^{2C_0\vge}G_{P+ \vge W}.$$
Hence, $G^{M}_{P - \varepsilon W} \asymp G^{M}_{P}$ for all $\varepsilon < \frac{1}{C_{0}}$.
 
 \medskip
 
Let $E_0: =\sup E_+$. Thus,   $0<\frac{1}{C_{0}} \leq E_0\leq \lambda_{0}$. We  claim that
$E_0 = \lambda_{0}$. 
Suppose to the contrary, that there exists $\delta > 0$ such that $E_0 + \delta < \lambda_{0},$ i.e.,
 $\frac{E_0 + \delta }{\lambda_0}  < 1.$ 

\medskip

Set $\dW := W(x) \!\dm(x)$, and define the iterated kernel 
\begin{equation*}
 K^{(i)}(x, y) := \left\{
                       \begin{array}{ll}
                      \left( E_0  + \delta \right) G^{M}_{P}(x, y) & i=0, \\[4mm]
                        \int_{M} G^{M}_{P}(x, z)  K^{(i-1)}(z, y) \dW(z) & i\geq 1,
                       \end{array}
                     \right.
\end{equation*}
and an operator $T :  L^{2}(M, \dW) \rightarrow L^{2}(M, \dW)$ by  
$$Tf(x): =  \left( E_0  + \delta \right)
 \int_{M}  G^{M}_{P}(x, y) f(y)
\dW(y).$$

 We claim that $T$ is well defined and $||T||_{L^2(M,\, \dW)} < 1.$

\medskip

Let $u$ be a positive supersolution of $(P - \lambda_{0} W) u = 0.$ Then it follows from \cite{YP2} that
$$
  \left( E_0  + \delta \right)  \int_{M} G^{M}_{P}(x, y) u(y) \dW(y) \leq \frac{\left( E_0  + \delta \right)u(x)}{\lambda_0}\, ,
$$
and
$$
\left(E_0 + \delta \right) \int_{M} u(x) G^{M}_{P}(x, y)\dW(x) \leq \frac{\left( E_0  + \delta \right) u(y)}{\lambda_0}\, .
$$
Therefore,   by Schur's test we obtain
 $$||T||_{L^2(M, \, \dW)} \leq \frac{ E_0  + \delta }{\lambda_0}  < 1.$$ 
 Define
\begin{equation}\label{converge_green}
H(x, y) := \sum_{i = 0}^{\infty} \left( E_0 + \delta \right)^{i} K^{(i)}(x, y) = \left( E_0 + \delta \right) G^{M}_{P - (E_0 + \delta) W}(x, y),
\end{equation}
which is well defined by Lemma~\ref{conv}.

 Hence, $T$ is a bounded linear integral operator on $L^{2}(M, \!\dW)$, with a quasimetric kernel $K$ and with a norm strictly less than $1$. Consequently,   \cite[Theorem~1.1]{FNV} implies that 
 \begin{equation}\label{verbi}
 \mathrm{e}^{ \frac{C_1K^{(1)}(x, y)}{K^{(0)}(x, y)}}K^{(0)}(x, y) \leq H(x, y) \leq  \mathrm{e}^{ \frac{C_2K^{(1)}(x, y)}{K^{(0)}(x, y)}} K^{(0)}(x, y),
 \end{equation}
 for some positive 
 constants $C_1$ and $C_2$.  
 
 Therefore, \eqref{verbi} and \eqref{converge_green} immediately imply
 \begin{equation}\label{eq-final}
  \left( E_0 + \delta \right) G^{M}_{P - (E_0 + \delta)W}(x, y) \leq K^{(0)}(x, y)\,  \mathrm{e}^{ \frac{C_2K^{(1)}(x, y)}{K^{(0)}(x, y)}}.
 \end{equation}
Now, observe that
$$
 \frac{K^{(1)}(x, y)}{K^{(0)}(x, y)} =    \frac{1}{G_{P}^{M}(x, y)} \int_{M} G_{P}^{M}(x, z) W(z)  G_{P}^{M}(z, y) \, \dm(z) \leq C_0.
$$
Hence, \eqref{eq-final} yields
$$
G^{M}_{P} (x, y)\leq  G^{M}_{P - (E_0 + \delta)W}(x, y) \leq  C G^{M}_{P} (x, y),
$$
where $C$ is a positive constant. This contradicts the maximality of $E_0$. 
Hence, $E_0 = \lambda_0.$
\end{proof}

\begin{rem}
{\rm
The validity of the conjecture $E_+=S_+ \setminus \{ \lambda_0 \}$, for a general nonnegative $G$-bounded perturbation $W$ of operator $P$ of the form \eqref{operator} remains open (cf.  \cite[Conjecture~3.7]{YP2}  and the counterexample \cite[Example~8.6]{YP5}).
}
\end{rem}

\section{Optimal Hardy-weights and $h$-bigness}\label{sec_hbig}

In the present section we study the $h$-bigness of  \emph{optimal Hardy-weights} $W\geq 0$ given by Theorem~\ref{thm_DFP}. Recall that $G$-bounded perturbations are 
non-$h$-big \cite{MM1}.
We note that under the conditions of Theorem~\ref{thm_DFP}, the operator $P_\gl : = P - \lambda W$ is subcritical
 in $M$ for all $\lambda < 1$. We have 
\begin{thm}\label{thm-hbig}
Consider the operator $P_\gl  := P - \lambda W$, and assume that 
\begin{itemize}
\item The operator $P$ is subcritical, and let  $G_\gf$ be a Green potential with respect to $P$, with a compactly supported smooth density $\gf$.  

\item There exists a positive solution $u$ of the equation $Pv=0$ in $M$ satisfying \eqref{DFP_cond}. 

\item $W$ is the corresponding optimal Hardy-weight given by \eqref{W-Hardy}.

\item  $0<\lambda <1$.
 
 \end{itemize}
 Set $\alpha_\pm := \frac{1 \pm \sqrt{1 - \lambda}}{2}$.
 
 Then $\gl W$ is $h_\pm$-big perturbations for the positive $P_\gl$-supersolutions
  $$h_\pm : = u^{(1 - \alpha_\pm)} (G_\gf)^{\alpha_\pm}.$$
\end{thm}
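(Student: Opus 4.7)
The argument splits naturally into two parts: (A) verifying that $h_\pm$ is a positive $P_\lambda$-supersolution in a neighborhood of infinity, and (B) establishing $h_\pm$-bigness of $\lambda W$ by contradiction, using the Liouville comparison principle (Theorem~\ref{YP07-thm}) together with the product identity
\begin{equation*}
h_+ h_- \;=\; u G_\varphi \;=\; \Phi^2, \qquad \Phi \;:=\; \sqrt{u G_\varphi},
\end{equation*}
where $\Phi$ is the ground state of the critical operator $P - W$ furnished by Theorem~\ref{thm_DFP}.

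For (A), the key tool is the convexity identity valid for symmetric operators of the form \eqref{symm_P}: for positive $w_1, w_2$ and $\alpha \in [0,1]$, a direct computation using $\nabla\log(w_1^{1-\alpha}w_2^{\alpha}) = (1-\alpha)\nabla\log w_1 + \alpha\nabla\log w_2$ together with $t|a|_A^2+(1-t)|b|_A^2-|ta+(1-t)b|_A^2 = t(1-t)|a-b|_A^2$ yields
\begin{equation*}
\frac{P(w_1^{1-\alpha} w_2^{\alpha})}{w_1^{1-\alpha} w_2^{\alpha}} \;=\; \alpha(1-\alpha)\bigl|\nabla \log(w_1/w_2)\bigr|_{A}^{2} \;+\; (1-\alpha)\frac{P w_1}{w_1} \;+\; \alpha\frac{P w_2}{w_2}.
\end{equation*}
Applied with $w_1 = u$, $w_2 = G_\varphi$ (so $Pw_1 = 0$ and $Pw_2 = \varphi$), $\alpha = \alpha_\pm$, and combined with the explicit form $W = \tfrac{1}{4}|\nabla\log(G_\varphi/u)|_A^2$ on $M\setminus\supp\varphi$ and $4\alpha_\pm(1-\alpha_\pm) = \lambda$, this identity gives $P_\lambda h_\pm = 0$ on $M\setminus\supp\varphi$, which is enough for the $h$-bigness analysis since the condition $v\leq h$ is imposed only in a neighborhood of infinity.

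For (B), suppose, for contradiction, that there exists $v > 0$ with $Pv = 0$ in $M$ and $v \leq h$ in a neighborhood of infinity for some $h \in \{h_+, h_-\}$. \emph{Case $h = h_+$:} since $\alpha_+ > 1/2$ and $G_\varphi/u \to 0$ at infinity by \eqref{DFP_cond}, we have $h_+/\Phi = (G_\varphi/u)^{\alpha_+ - 1/2} \to 0$, so $v \leq h_+ \leq C\Phi$ near infinity, and by continuity on compacta $v \leq C'\Phi$ on all of $M$. Theorem~\ref{YP07-thm}, applied with $P_0 = P$, $P_1 = P - W$, ground state $\Phi$, and $\Psi = v$ (note $P\Psi = 0 \leq 0$ and $v^{2}A \leq (C')^{2}\Phi^{2}A$), then forces $P$ to be critical with $v$ as ground state, contradicting the subcriticality of $P$. \emph{Case $h = h_-$:} the direct bound fails because $h_-/\Phi \to \infty$; instead, I pass to the geometric mean $\Psi := \sqrt{v h_+}$, for which the product identity gives $\Psi^{2} = v h_+ \leq h_- h_+ = \Phi^{2}$ near infinity, hence $\Psi \leq C\Phi$ globally. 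The convexity identity applied with $w_1 = v$, $w_2 = h_+$ and $\alpha = 1/2$, combined with $Pv = 0$ and $Ph_+/h_+ = \lambda W$ from (A), yields
\begin{equation*}
\frac{P\Psi}{\Psi} \;=\; \frac{1}{4}\bigl|\nabla\log(v/h_+)\bigr|_A^2 + \frac{\lambda}{2}W \;=:\; V_\Psi \geq 0 \qquad \text{on } M\setminus\supp\varphi,
\end{equation*}
so $\Psi$ is a positive solution of $(P - V_\Psi)\Psi = 0$ near infinity.

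The main obstacle is closing the $h_-$ case into a contradiction. My plan is to dichotomize on the sign of $V_\Psi - W$. If $V_\Psi \leq W$ near infinity, then $\Psi$ is a subsolution of the critical operator $P - W$ with $\Psi^{2} \leq C^{2}\Phi^{2}$, so Theorem~\ref{YP07-thm} applied with $P_0 = P_1 = P - W$ forces $\Psi = c\Phi$; this unwinds to $v = c^{2} h_-$, whence $Pv = c^{2}Ph_-$ which is strictly positive on $\{W > 0\}$ by (A), contradicting $Pv = 0$. The hard subcase is $V_\Psi > W$ on a set of positive measure near infinity: here $\Psi > 0$ solving $(P - V_\Psi)\Psi = 0$ forces $P - V_\Psi \geq 0$ by Allegretto--Piepenbrink, while strict monotonicity of $\lambda_0(P,\cdot,M)$ around the critical weight $W$ gives $\lambda_0(P,V_\Psi,M) < 1$, contradicting the nonnegativity of $P - V_\Psi$. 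Making the strict monotonicity precise for the non-compactly supported perturbation $V_\Psi - W$ will be the delicate technical step, and I anticipate invoking the quantitative Liouville comparison proved later in Section~\ref{Sec_4.1} (Theorem~\ref{liouville_critical_thm}) if the direct approach encounters integrability issues.
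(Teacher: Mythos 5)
There is a genuine gap: your $h_-$ case is not closed, and the dichotomy you sketch cannot be closed along the proposed lines. The function $\Psi=\sqrt{v\,h_+}$ satisfies $(P-V_\Psi)\Psi=0$ only in a neighborhood of infinity (outside $\supp\gf$, and only where $v\le h_-$ holds), so Allegretto--Piepenbrink yields at best $P-V_\Psi\ge 0$ in $M\setminus K$ for some compact $K$. This is not in tension with the criticality of $P-W$ in $M$: criticality is destroyed by passing to $M\setminus K$ (the restricted operator is subcritical there), so a weight exceeding $W$ near infinity can perfectly well keep the operator nonnegative on $M\setminus K$, and the ``strict monotonicity of $\gl_0$'' you invoke is simply false for perturbations that are nonnegative only near infinity. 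The other subcase also needs a global inequality $(P-W)\Psi\le 0$ in all of $M$ to feed into Theorem~\ref{YP07-thm}, which you only have near infinity. Separately, even your $h_+$ argument via Theorem~\ref{YP07-thm} implicitly assumes $P$ symmetric (that theorem is stated for operators of the form \eqref{eq-symm}), whereas Theorem~\ref{thm-hbig} is asserted for general operators of the form \eqref{operator}; your part (A), by contrast, is fine in the general setting since the logarithmic-convexity identity only picks up a quadratic term from the principal part (this is \cite[Theorem~3.1]{YP2}).

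The missing idea is the minimal-growth criterion \cite[Proposition~6.1]{DFP}, which makes the whole case distinction unnecessary and avoids the Liouville comparison theorem altogether. If $0<v\le h_\pm$ near infinity with $Pv=0$ in $M$, then since \emph{both} exponents $\alpha_\pm$ are strictly positive,
\begin{equation*}
\frac{v(x)}{u(x)}\;\le\;\Big(\frac{G_\gf(x)}{u(x)}\Big)^{\alpha_\pm}\longrightarrow 0 \qquad \mbox{as } x\to\infty,
\end{equation*}
by \eqref{DFP_cond}. By \cite[Proposition~6.1]{DFP} this forces $v$ to be a positive solution of $Pw=0$ of minimal growth at infinity, i.e.\ a ground state, contradicting the subcriticality of $P$. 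This single argument treats $h_+$ and $h_-$ simultaneously (note $h_+\le C h_-$ near infinity, so $h_-$ is in fact the only case one needs), works for nonsymmetric $P$, and replaces both your use of Theorem~\ref{YP07-thm} and the problematic dichotomy.
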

\begin{proof}
Let $K:=\supp \gf$. Since $\lambda = 4 \alpha_\pm (1 - \alpha_\pm)$, it follows that $h_\pm$ are indeed positive $P_\gl $-supersolutions in $M$, which are positive solutions of the equation 
$P_\gl v=0$ in $M\setminus K$ (see \cite[Theorem~3.1]{YP2}). 

Let $v_\pm$ be nonnegative solutions of $Pw=(P_\gl + \lambda W)w = 0 $ in $M$ satisfying 
$0 \leq v_\pm \leq h_\pm$. Suppose that $v_\pm>0$. So, 
$$
\frac{v_\pm(x)}{u(x)} \leq \left( \frac{G_\gf(x)}{u(x)} \right)^{\alpha_\pm}. 
$$
By our assumption, $\lim_{x \rightarrow \infty} \frac{G(x)}{u(x)} = 0$, therefore, $\lim_{x \rightarrow \infty} \frac{G_\gf(x)}{u(x)} = 0$. Consequently,
$$
\lim_{x \rightarrow \infty} \frac{v_\pm(x)}{u(x)} = 0.
$$
In light of  \cite[Proposition~6.1]{DFP}, we conclude $v_\pm$ are positive solutions of the equation $Pw=0$ in $M$ of minimal growth in a neighborhood of infinity in $M$. Hence 
$v_\pm$ are ground states, and $P$ is critical in $M$, a contradiction. Hence, we conclude $v_\pm \equiv 0.$
\end{proof}

\begin{rem}{\em 
		1. Since near infinity in $M$ we have
		$$ \left( \frac{G_\gf(x)}{u(x)} \right)^{\alpha_+}\leq  \left( \frac{G_\gf(x)}{u(x)} \right)^{\alpha_-},$$ 
		  it is enough to prove that   $\gl W$ is  $h_-$-big perturbation.

\medskip 
		  
		  2. Fix $x_0\in M$. We may consider the punctured manifold $M^*:=M\setminus \{x_0\}$, and let
$u$ is a positive solution of the equation $P  w=0$ in $M$, and $G(x):=G_P^M(x,x_0)$  satisfying \eqref{DFP_cond}. 
Let 
$$
W(x) := \frac{1}{4} \left| \nabla \log\left( \frac{G(x)}{u(x)} \right) \right|^2_{A(x)} \qquad \mbox{in } M\setminus  \{x_0\}.
$$ 
As in the proof of Theorem~\ref{thm-hbig}, it follows that for $0<\gl<1$, the potential $\gl W$ is $h_-$-big perturbations for $h_-:=u^{(1 - \alpha_-)} (G)^{\alpha_-}$. 
}
\end{rem}

\section{Critical Hardy-weights}\label{sec-critical-Hardy}
Throughout the present section we assume that $P$ is a subcritical operator in $M$ of the form \eqref{operator}.
We fix a positive  Radon measure $\gm$ on $M$ with a \textquoteleft nice\textquoteright\, nonnegative density $\gm(x)$. We denote $\mathrm{d}\gm=\gm(x)\dm$, and we assume that the corresponding {\em Green potential} $G_\gm$ is finite. That is, we assume that for some $x\in M$ (and therefore, for any $x\in M$) 
\begin{equation}\label{eq_finite_pot}
G_\gm(x):= \int_{M}\!\!\!  \Green{M}{P}{x}{y}\mathrm{d}\gm(y)<\infty.
\end{equation}
A sufficient condition for \eqref{eq_finite_pot} to hold is obviously, the existence of $k\geq 1$, and a positive (super)solution $\vgf^\star$ of the equation $P^\star u=0$ in $M^\star_k$ such that $\vgf^\star\in L^1(M^\star_k,\mathrm{d}\gm)$.

Set $$W_\gm(x):=\frac{\gm(x)}{G_\gm(x)}\,.$$

Since $PG_\gm=\gm$, it follows that the Green potential $G_\gm$ is a positive solution of the equation $(P-W_\gm)u=0$ in $M$, so, $\gl_0:=\gl_0(P,W_\gm,M)\geq 1$.
Moreover, since 
\begin{equation}\label{inv}
\int_{M}\Green{M}{P}{x}{y}W_\gm(y)G_\gm(y)\dm(y)
=G_\gm(x) \quad\forall x\in M,
\end{equation}
it follows that $G_\gm$ is a positive {\em invariant solution} of the equation  $(P-W_\gm)u=0$ in $M$ (see  \cite{YP2,YP17} and references therein). 

Without loss of generality, we assume that $0\in M$, and we denote $G(x):=\Green{M}{P}{x}{0}$. Since $PG=0$ in $M\setminus \{0\}$, and $G$ has minimal growth at infinity in $M$, it follows that for a given Green potential $G_\gm$ and for $\vge>0$ small enough, there exists a positive constant $C$ such that 
$$G(x)\leq CG_\gm(x) \qquad \forall x\in M\setminus B(0,\vge).$$
On the other hand,   let $V_\gm(x):=\frac{\gm(x)}{G(x)}$ in $M$. 
The following lemma characterizes Green potentials that are comparable (near infinity in $M$) to $G$ (see \cite[Corollary~4.7]{YP5}).
\begin{lemma}\label{lem_1}
	There exists a positive constant $C>0$ such that      
	\begin{equation}\label{eq_min_gr}
	C^{-1}G_\gm(x) \leq G(x) \qquad \forall x\in M
	\end{equation}
	if and only if $V_\gm$ is a $G$-semibounded perturbation of $P^\star$ in $M$.

\medskip
	
	Moreover, in this case, we  have $V_\gm\asymp W_\gm$ near infinity in $M$,  and in particular, $W_\gm$ is a $G$-semibounded perturbation of $P^\star$ in $M$.

	\medskip
	
	In addition, the convex set of all positive solutions $v$ of the equation $P^\star u=0$
	 in $M$ satisfying $v(0)=1$ is a bounded set in $L^1(M, \mathrm{d}\gm)$.    
\end{lemma}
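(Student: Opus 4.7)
The plan is to prove the three claims in order, exploiting the duality $G^{M}_{P^\star}(x,y)=G^{M}_{P}(y,x)$ together with the minimal growth of $G$ and the Martin integral representation of positive $P^\star$-solutions. First I would prove the main equivalence by unraveling the $3G$-integral. Taking the basepoint $x_0=0$ and using the duality, $V_\mu$ is a $G$-semibounded perturbation of $P^\star$ iff
\[
\sup_{y\in M}\frac{1}{G(y)}\int_M G(z)\,V_\mu(z)\,G^M_P(y,z)\,\dm(z)<\infty.
\]
Since $V_\mu(z)G(z)=\mu(z)$, the inner integral equals $G_\mu(y)$, and the condition reduces to $\sup_y G_\mu(y)/G(y)<\infty$, i.e., $G_\mu\leq CG$ on $M$.

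Next I would establish the ``moreover'' clause. Because $PG_\mu=\mu\geq 0$, $G_\mu$ is a positive $P$-supersolution on $M$, whereas $G$ is a positive $P$-solution in $M\setminus\{0\}$ of minimal growth at infinity. Choosing $\varepsilon>0$ small and $C'>0$ so that $G\leq C'G_\mu$ on $\partial B(0,\varepsilon)$, minimal growth propagates this inequality to all of $M\setminus B(0,\varepsilon)$; combined with $G_\mu\leq CG$ this gives $G\asymp G_\mu$ outside $B(0,\varepsilon)$, and hence $V_\mu\asymp W_\mu$ near infinity. To upgrade this to $G$-semiboundedness of $W_\mu$, I would split $W_\mu=W_\mu\chi_{B(0,\varepsilon)}+W_\mu\chi_{M\setminus B(0,\varepsilon)}$: the far-field piece is dominated by a constant multiple of $V_\mu$ and inherits the $3G$-bound from the previous step, while the compactly supported $L^{p/2}$ piece is a small (hence $G$-semibounded) perturbation for the trivial reason that the $3G$-tail integral over $M_n^*$ vanishes once $M_n$ contains its support.

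For the final $L^1$ bound, I would push the $3G$-inequality to the Martin boundary of $P^\star$. Any positive solution $v$ of $P^\star u=0$ with $v(0)=1$ admits an integral representation $v=\int K^{P^\star}(\cdot,\xi)\,\mathrm{d}\nu_v(\xi)$, where the Martin kernel is normalised so that $K^{P^\star}(0,\xi)\equiv 1$, forcing $\nu_v$ to be a probability measure. Rewriting the inequality from the first step as $\int_M G(z)V_\mu(z)[G^M_{P^\star}(z,y)/G(y)]\,\dm(z)\leq C$ and letting $y\to\xi$ along the Martin compactification, Fatou's lemma applied to nonnegative integrands yields $\int_M G(z)V_\mu(z)K^{P^\star}(z,\xi)\,\dm(z)\leq C$ uniformly in $\xi$. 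Integrating against $\mathrm{d}\nu_v$, Fubini--Tonelli together with $GV_\mu=\mu$ delivers the uniform bound $\int_M v\,\mathrm{d}\mu\leq C$.

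The main obstacle is this last step: invoking the Martin representation for $P^\star$ on a general noncompact manifold and justifying the boundary-limit passage inside the $3G$-integral. The limit is harmless because all integrands are nonnegative and Fatou applies in the right direction, and the representation is standard once one recalls that $P^\star$ is subcritical. An alternative avoiding Martin theory would approximate $v$ by Dirichlet solutions on the exhausting domains $M_j$ and use monotone convergence together with the resolvent identity, but the Martin route is more conceptual and transparent.
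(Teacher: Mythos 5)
Your proposal is correct and follows essentially the same route as the paper: the equivalence comes from unravelling the $3G$-expression via the identity $V_\gm G=\gm$ (with base point $0$ and the duality $G_{P^\star}^M(x,y)=G_P^M(y,x)$), the comparison $G\asymp G_\gm$ away from $0$ comes from the minimal growth of $G$ together with $G_\gm$ being a positive supersolution, and the $L^1(M,\mathrm{d}\gm)$ bound comes from Fatou's lemma applied to $G_P^M(x,\cdot)/G_P^M(x,0)$ plus the Martin representation theorem, exactly as in the paper. You merely spell out two points the paper leaves implicit (the splitting of $W_\gm$ into a compactly supported piece plus a piece dominated by $V_\gm$, and the details of the Fatou--Martin step), and both are handled correctly.
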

\begin{proof}
 Assume first that $V_\gm$ is a $G$-semibounded perturbation of $P^\star$ in $M$. Then
	\begin{multline*}
	G_\gm(x)= \int_{M}\Green{M}{P}{x}{y}\frac{\gm(y)}{G(y)}G(y)\!\dm(y)=\\ \int_{M}\Green{M}{P}{x}{y}V_\gm(y)G(y)\dm(y)\leq CG(x) \qquad \forall x\in M,
	\end{multline*}
	and \eqref{eq_min_gr} holds.
	
	\medskip
	
	On the other hand, suppose that \eqref{eq_min_gr} holds. Consequently,
	\begin{equation}\label{eq-8}
	\int_{M}\!\Green{M}{P}{x}{y}V_\gm(y)G(y)\dm(y) =
	G_\gm(x)\leq C G(x)\quad \forall x\in M. 
	\end{equation}
Therefore,, $V_\gm$ is a $G$-semibounded perturbation of $P^\star$ in $M$. In particular, in this case we have $G_\gm\asymp G$ near infinity. This in turn, obviously implies that $V_\gm\asymp W_\gm$ near infinity.
	
	\medskip

	In addition, by \eqref{eq-8} we have
	$$
	\int_{M}\frac{\Green{M}{P}{x}{y}}{\Green{M}{P}{x}{0}}\dmu(y)=\int_{M}\frac{\Green{M}{P}{x}{y}V_\gm(y)G(y)}{G(x)}\dm(y) \leq C\qquad \forall x\in M.
	$$ 
	Therefore, the last assertion of the lemma follows from Fatou's lemma and the Martin representation theorem.  
\end{proof}
 The following lemma gives, in particular, a positive answer to  Problem~\ref{pb_equivalence} for the class of nonnegative $G$-semibounded perturbations of the form $W_\gm$.
\begin{lemma}\label{lem_2}
	Suppose that  \eqref{eq_min_gr} holds true, then $P-W_\gm$ is positive-critical in $M$ with respect to $W_\gm$, and $G_\gm$ is its ground state.
 Moreover, 
 $$SE_+(P,W_\gm,M)=S_+(P,W_\gm,M) =(-\infty,\gl_0(P,W_\gm,M))=(-\infty,1).$$
\end{lemma}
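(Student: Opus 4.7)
I would establish, in order: (i) $P - W_\mu$ is critical with ground state $G_\mu$ and $\lambda_0(P, W_\mu, M) = 1$; (ii) positive-criticality with respect to $W_\mu$; and (iii) $SE_+ = S_+ = (-\infty, 1)$.

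For (i) and (ii), note first that $PG_\mu = \mu = W_\mu G_\mu$, so $(P - W_\mu) G_\mu = 0$ in $M$, whence $G_\mu \in \mathcal{C}_{P - W_\mu}(M)$ and $\lambda_0(P, W_\mu, M) \geq 1$. The invariance identity \eqref{inv} displays $G_\mu$ as its own weighted Green potential, namely $G_\mu(x) = \int_M G^M_P(x, y) W_\mu(y) G_\mu(y) \dm(y)$, so that $G_\mu$ is a positive invariant function of the weighted Green operator $Tf(x) := \int_M G^M_P(x, y) W_\mu(y) f(y) \dm(y)$ at eigenvalue $1$. By a standard result in criticality theory (see \cite{YP2, YP17} and references therein), such an invariance at eigenvalue $1$ is equivalent, simultaneously, to the criticality of $P - W_\mu$ with $G_\mu$ as its ground state and to the positive-criticality of $P - W_\mu$ with respect to $W_\mu$. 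In particular, $\lambda_0 = 1$, and if $\phi^*$ denotes the ground state of $P^* - W_\mu$, then $\int_M \phi^* \dmu = \int_M \phi^* G_\mu W_\mu \dm < \infty$.

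For (iii), the identity $S_+ = (-\infty, 1)$ is immediate from $\lambda_0 = 1$ via the standard subcriticality dichotomy (for $W_\mu \gneqq 0$, subcriticality of $P - \lambda W_\mu$ holds precisely for $\lambda < \lambda_0$). For semiequivalence, fix a base point $x_0$ and $\lambda \in (-\infty, 1)$. When $\lambda \geq 0$, the lower bound $G^M_{P - \lambda W_\mu}(\cdot, x_0) \geq G^M_P(\cdot, x_0)$ follows from the generalized maximum principle. For the upper bound, Lemma~\ref{lem_1} gives that $W_\mu$ is a $G$-semibounded perturbation of $P^*$, so Lemma~\ref{interval_equivalence} applied to $P^*$ and translated via $G^M_{P^*}(x, y) = G^M_P(y, x)$ yields semiequivalence for $|\lambda|$ below an explicit threshold $1/C_0$. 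To extend the semiequivalence to the full range $[0, 1)$, I would exploit the invariance \eqref{inv} together with the asymptotic $G_\mu \asymp G^M_P(\cdot, x_0)$ near infinity (from Lemma~\ref{lem_1}); these imply that $T$ has spectral radius exactly $1$ on the cone of functions equivalent to $G_\mu$ near infinity, so the Neumann series for $G^M_{P - \lambda W_\mu}(\cdot, x_0)$ converges (uniformly away from the pole) for every $\lambda < 1$. The case $\lambda < 0$ is simpler: reversed monotonicity combined with the $G$-semibounded estimate suffices.

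The principal technical obstacle is the last step of (iii): pushing semiequivalence past the naive threshold supplied by the $G$-semibounded constant to cover the whole interval $[0, 1)$. This requires a quantitative use of the invariance of $G_\mu$, with additional care to treat the pole singularity of $G^M_P(\cdot, x_0)$, which is not reflected in $G_\mu$.
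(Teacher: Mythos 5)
There is a genuine gap in your treatment of parts (i) and (ii). You assert that the invariance identity $G_\mu(x) = \int_M G^M_P(x,y)\,W_\mu(y)\,G_\mu(y)\dm(y)$ is, by ``a standard result,'' equivalent to criticality of $P-W_\mu$ with ground state $G_\mu$ and to positive-criticality with respect to $W_\mu$. No such result exists, and indeed it is false: $G_\mu$ is \emph{always} an invariant positive solution of $(P-W_\mu)u=0$ (this is noted in the paper just before Lemma~\ref{lem_1}, with no hypothesis on $\mu$ beyond finiteness of $G_\mu$), yet one can choose $\mu$ decaying so slowly that $G_\mu$ grows strictly faster than $G$ near infinity, and then $P-W_\mu$ is subcritical. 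For instance with $P=-\Delta$ on $\R^3$ and $\mu(x)=(1+|x|)^{-\alpha}$ for $\alpha\in(2,5/2)$, one finds $G_\mu\sim|x|^{2-\alpha}$ and $W_\mu\sim\gamma(1-\gamma)|x|^{-2}$ with $\gamma=\alpha-2<1/2$, so $\lambda_0(P,W_\mu,M)>1$; yet the invariance identity still holds. In other words, invariance alone is an identity, not a criterion. The hypothesis \eqref{eq_min_gr} is precisely what is needed and you never use it in (i)--(ii). The paper's argument is: any positive supersolution $v$ of $(P-W_\mu)u=0$ near infinity is a positive supersolution of $Pu=0$ there (because $W_\mu\geq 0$), so by minimal growth of $G$ and \eqref{eq_min_gr}, $G_\mu\leq CG\leq C_1 v$ near infinity; hence $G_\mu$ is a ground state and $P-W_\mu$ is critical. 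Positive-criticality then follows from $G_\mu\asymp G$ near infinity together with the invariance of the \emph{adjoint} ground state $u^\star_\mu$ of $P^\star-W_\mu$, which gives $\int_M G_\mu W_\mu u^\star_\mu\dm \asymp \int_M G\,W_\mu u^\star_\mu\dm = u^\star_\mu(0)<\infty$.

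Your part (iii) is also less satisfactory than the paper's. The paper's route is short: for $0<\alpha<1$, $G_\mu$ is a positive supersolution of $(P-\alpha W_\mu)u=0$ (since $(P-\alpha W_\mu)G_\mu=(1-\alpha)W_\mu G_\mu\geq 0$), so by minimal growth of $G^M_{P-\alpha W_\mu}(\cdot,0)$ and \eqref{eq_min_gr}, $G^M_{P-\alpha W_\mu}(\cdot,0)\leq C G_\mu\leq C'G$ near infinity, while $G\leq G^M_{P-\alpha W_\mu}(\cdot,0)$ by the generalized maximum principle; the case $\alpha<0$ is \cite[Corollary~3.6]{YP2}. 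Your Neumann-series route is what you yourself flag as the ``principal technical obstacle'' and is left as a sketch: the assertion that ``$T$ has spectral radius exactly $1$ on the cone of functions equivalent to $G_\mu$ near infinity, so the Neumann series converges \dots for every $\lambda<1$'' does not by itself furnish the pointwise two-sided bound needed for semiequivalence, and the convergence of the series for $\lambda<\lambda_0$ is already Lemma~\ref{conv}; the missing piece is an upper bound $G^M_{P-\lambda W_\mu}(\cdot,0)\leq CG(\cdot,0)$, which is exactly what the minimal growth comparison supplies directly.
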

\begin{proof}
Recall that $G_\gm$ is a positive solution of the equation $(P-W_\gm)u=0$ in $M$. On the other hand, by our assumption $G_\gm\asymp G$ 
near infinity in $M$. Note that any positive supersolution $v$ of the equation $(P-W_\gm)u=0$ near infinity in $M$ is a positive supersolution  of the equation $Pu=0$ in this neighborhood, while $G$ is a positive solution of  $Pu=0$ of minimal growth near infinity. 

Consequently, 
$$G_\gm\leq CG\leq C_1 v \qquad \mbox{near infinity in } M.$$ Therefore, $G_\gm$ is a ground state of  the equation $(P-W_\gm)u=0$ in $M$, and $P-W_\gm$ is critical in $M$. 
Consequently, for any $0<\ga <1$ and $\varepsilon > 0$ sufficiently small, we have 
$$G \asymp \Green{M}{P-\ga W_\gm}{\cdot}{0}\asymp  G_\gm 
\qquad \mbox{in} \ M \setminus B(0, \varepsilon).
$$
Furthermore, in light of \cite[Corollary~3.6]{YP2}, $G \asymp \Green{M}{P-\ga W_\gm}{\cdot}{0}$ also for any $\ga<0$.  
 So, $SE_+(P,W_\gm,M)=S_+(P,W_\gm,M)=(-\infty,1)$. 

Moreover, since $P-W_\gm$ is critical in $M$, we have that  $P^\star-W_\gm$ is also critical in $M$. Denote by $u_\gm^\star$ its ground state. 
In particular,  $u_\gm^\star$ is a positive invariant solution of the corresponding equation \cite[Theorem~2.1]{YP2}. Therefore,
\begin{equation*}\label{gseq7}
\int_{M} \!\!G_\gm(x)W_\gm(x)u_\gm^\star(x)\!\dm(x)\!
\asymp\! \int_{M}\!\! G(x)W_\gm(x)u_\gm^\star(x)\!\dm(x)
\!=\!u_\gm^\star(0)\!<\!\infty .
\end{equation*} 
Hence, $P-W_\gm$ is positive-critical in $M$ with respect to $W_\gm$. 
\end{proof}

\begin{lemma}\label{lem-semismall}
 For $k\geq 2$, let $\chi_k$ be a smooth function on $M$ such that   
$$0\leq \chi_k(x)\leq 1, \mbox{ in } M \qquad \chi_k\!\!\upharpoonright_{M_{k-1}}=0,   \qquad \chi_k\!\!\upharpoonright_{M_{k}^\star}=1,$$
where $\{M_k\}$ is an exhaustion of $M$ (see Section~\ref{sec-pre}). Denote by $\gm_k(x):=\chi_k(x)\gm(x)$.
Assume further that
\begin{equation}\label{eq_uniform}
\lim_{k\to \infty}  \left\|\frac{G_{\gm_k}}{G}\right\|_{\infty; M_{k}^\star}=0. 
\end{equation}
Then $W_\gm$ is a semismall perturbation of the operator $P^\star$ in $M$, and for any $1\leq p\leq \infty$  the
integral operator
$$\mathcal{G}_\gm  f(x):=  \int_{M}  \Green{M}{P}{x}{y}W_\gm(y)f(y)\dm(y)$$
is compact on $L^{p}(\gf_p)$, where 
\begin{equation}\label{eq:2.9a}
\gf_p:=G_\gm^{-1}(G_\gm W_\gm u_\gm^\star)^{1/p}.
\end{equation}

Suppose in addition that $P$ is a symmetric  operator on
$L^2(M,W_\gm(x)\dm)$ with a core $C_0^\infty(M)$, 
Let
$\{(\vgf_k,\gl_k)\}_{k=0}^\infty$ be the set of the corresponding
pairs of eigenfunctions and eigenvalues  (counting multiplicity), where $\vgf_0:=G_\gm$ and $\gl_0=1$. Then  for every $k\geq 1$
there exists a positive constant $C_k$ such that 
\be\label{efest}
|\vgf_k(x)|\leq C_k \vgf_0(x) \qquad \mbox{in } M. 
\ee 
Furthermore, the function
$\vgf_k/\vgf_0$ has a continuous extension $\psi_k$ up to the Martin
boundary $\partial_{P}^MM$ of $P$ in $M$. 
\end{lemma}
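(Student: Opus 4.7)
The proof splits naturally into three parts, matching the three conclusions of the lemma.

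\emph{Preliminary reduction.} I would first show that \eqref{eq_uniform} places us in the setting of Lemma~\ref{lem_2}, namely that \eqref{eq_min_gr} holds. Writing $\gm = \gm_k + (\gm - \gm_k)$ with $\gm - \gm_k$ supported in $\overline{M_k}$, the Green potential $G_{\gm - \gm_k}$ is the potential of a compactly supported measure and hence is comparable to $G(\cdot, 0)$ near infinity in $M$; together with \eqref{eq_uniform}, which dominates $G_{\gm_k}$ by $G(\cdot, 0)$ on $M_k^\star$ for $k$ large, this yields $G_\gm \asymp G(\cdot, 0)$ near infinity. Lemma~\ref{lem_2} then ensures that $G_\gm$ and $u_\gm^\star$ are the ground states of $P - W_\gm$ and $P^\star - W_\gm$, respectively.

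\emph{Semismallness of $W_\gm$ for $P^\star$.} Taking $x_0 = 0$ in the definition of a semismall perturbation for $P^\star$ and using $G^M_{P^\star}(x, y) = G^M_P(y, x)$, the relevant quantity is
\[
I_n(y) := \int_{M_n^\star} \frac{G(z, 0)\, W_\gm(z)\, G(y, z)}{G(y, 0)}\, \dm(z).
\]
Substituting $W_\gm = \gm/G_\gm$ and using $G_\gm(z) \asymp G(z, 0)$ on $M_n^\star$ for $n$ sufficiently large (from the preliminary reduction), I obtain
\[
I_n(y) \lesssim \frac{1}{G(y, 0)} \int_{M_n^\star} G(y, z)\, \gm(z)\, \dm(z).
\]
Since $\chi_n \equiv 1$ on $M_n^\star$, the restriction of $\gm$ to $M_n^\star$ coincides with $\gm_n$, so extending the integration domain to all of $M$ bounds the right-hand side by $G_{\gm_n}(y)/G(y, 0)$. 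Taking the supremum over $y \in M_n^\star$ and applying \eqref{eq_uniform} gives $\sup_{y \in M_n^\star} I_n(y) \to 0$, establishing the claimed semismallness.

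\emph{Compactness and the symmetric case.} With semismallness of $W_\gm$ in hand, Theorem~\ref{thmcomp} yields the compactness of $\mathcal{G}_\gm$ on $L^p(\gf_p)$ for every $p \in [1, \infty]$; the parallel semismallness for $P$ needed in Theorem~\ref{thmcomp} follows by a symmetric calculation (which is identical to the one above in the symmetric case). When $P$ is symmetric, conclusion (6) of the same theorem immediately yields both the pointwise estimate $|\vgf_k(x)| \le C_k \vgf_0(x)$ on $M$ and the continuous extension of $\vgf_k/\vgf_0$ up to the Martin boundary $\partial_P^M M$ of $P$ in $M$.

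\emph{Main obstacle.} The crux is the estimate in the semismallness step: producing the equivalence $G_\gm \asymp G(\cdot, 0)$ on the tail region $M_n^\star$ from \eqref{eq_uniform}, and matching the cutoff index $n$ to the integration region so that $\gm \equiv \gm_n$ there. Once these identifications are in place, \eqref{eq_uniform} supplies exactly the required decay, and the remaining conclusions reduce to invoking Theorem~\ref{thmcomp} and its conclusion (6).
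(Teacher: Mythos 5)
Your proof is correct and follows essentially the same route as the paper: the key step is $W_\gm(z)G(z)\lesssim\gm(z)$ via the minimal-growth bound $G\lesssim G_\gm$, combined with $\gm=\gm_n$ on $M_n^\star$ so that the semismallness integral is controlled by $G_{\gm_n}/G$, followed by an appeal to Theorem~\ref{thmcomp}. The only real difference is cosmetic and in your favor: you evaluate the supremum directly on $M_n^\star$ where \eqref{eq_uniform} applies, whereas the paper first upgrades \eqref{eq_uniform} to a bound on all of $M$ via the generalized maximum principle; you also make explicit the reduction from \eqref{eq_uniform} to \eqref{eq_min_gr} (needed to invoke Lemma~\ref{lem_2} and to speak of $u_\gm^\star$) and flag the need for $P$-semismallness in Theorem~\ref{thmcomp}, both of which the paper leaves implicit.
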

\begin{proof}
The generalized maximum principle, and \eqref{eq_uniform} imply
\begin{equation}\label{eq_uniform1}
\lim_{k\to \infty}  \left\|\frac{G_{\gm_k}}{G}\right\|_{\infty; M}=0.
\end{equation} 
Hence,
\begin{multline*}
 \int_{M_{k}^\star}\Green{M}{P}{x}{y}W_\gm(y)G(y)\dm(y)=\int_{M_{k}^\star}\Green{M}{P}{x}{y}\frac{\gm(y)}{G_\gm(y)}G(y)\dm(y) \\
\leq  C \int_{M_{k}^\star}\Green{M}{P}{x}{y}\frac{\gm(y)}{G_\gm(y)}G_\gm(y)\dm(y)=CG_{\gm_k}(x)<\ge G(x)
 \qquad \forall x\in M,
\end{multline*}	
Consequently,  $W_\gm$ is a semismall perturbation of the operator $P^\star$ in $M$.
Therefore,  Theorem~\ref{thmcomp} implies that for any $1\leq p\leq \infty$  the
integral operator
$\mathcal{G}_\gm  f(x)$
is compact on $L^{p}(\gf_p)$, and its spectrum is $p$-independent and contained in the closed unit disk.  More precisely, the spectrum contains $0$, and besides, consists of at most a sequence of eigenvalues of
finite multiplicity which has no point of accumulation except $0$. 
Moreover, $\vgf_0=G_\gm$ is the unique nonnegative  eigenfunction of the operator
$\mathcal{G}_\gm \!\!\upharpoonright_{L^p(\gf_p)}$. Furthermore,  the corresponding eigenvalue $\gl_0=1$   is simple. 

\medskip

The statement concerning the symmetric case follows from Theorem~\ref{thmcomp}. We note that  by \cite{YP17}, the continuous extension $\psi_k$ of $\vgf_k/\vgf_0$ satisfies for $k\geq 1$
\bea\label{psineq} \psi_k(\xi)&=&
(\psi_0(\xi))^{-1}\gl_k\int_{M}K^M_{P}(z,\xi)W_\gm (z)\vgf_k(z)\dm(z)=
\nonumber\\ &
&\frac{\gl_k\int_{M}K^M_{P}(z,\xi)W_\gm(z)\vgf_k(z)\dm(z)}
{\int_{M}K^M_{P}(z,\xi)W_\gm(z)\vgf_0(z)\dm(z)}\qquad  \forall\xi\in\partial_{P}^MM,
 \eea 
where $K^M_{P}(\cdot,\xi)$ is the Martin kernel of $P$ in $M$ with a pole at $\xi\in\partial_{P}^MM$, and $\psi_0$ is the
corresponding continuous extension of $G_\gm/G$.
\end{proof}
\begin{remark}\label{rem-landscape}{\em 
		If $\gm=1$ and \eqref{eq_finite_pot} is satisfied, then $G_1$ is called the {\em torsion function} (see for example, \cite{vdBI13} and references therein). In a recent paper \cite{ADFJM}, D.~N.~Arnold, G.~David, M.~Filoche, D.~Jerison and S.~Mayboroda, considered the Green potential $W_1$ (which they called the {\em effective potential}) associated with a Schr\"odinger operator $L$  in a bounded Lipschitz domain $M\subset \R^N$. They showed  a remarkable connection between the Neumann eigenfunctions of $L$ and the  torsion function $G_1$ (which they call the {\em landscape function}) by proving that $W_1$ acts as an effective potential that
		governs the exponential decay of these eigenfunctions  and
		delivers information on the distribution of eigenvalues near the bottom
		of the spectrum.
	}
\end{remark}

\section{Finite torsional rigidity}\label{sec-torsion}
Throughout the present section we assume that $P$ is subcritical, symmetric operator on $L^2(M,\!\dm)$ of the form \eqref{symm_P}. Without loss of generality, we assume that $0\in M$, and we denote $G(x):=\Green{M}{P}{x}{0}$. In addition, we assume that $G_1\in L^1(M,\!\dm)$. So, we assume that the Green potential $G_1$   satisfies
$$
G_1(x):=\int_{M}  \Green{M}{P}{x}{y}\dm(y)<\infty, \quad \mbox{and }\; T(M):= \int_{M}  G_1(x)\dm(x)<\infty.$$
$G_1$ (resp., $ T(M)$) is called  the {\em torsion function} (resp., {\em torsional rigidity}) with respect to the operator  $P$ and the measure $\dm$. 
Note that if $G_1\asymp G$, then the finiteness of the torsion function $G_1$ is clearly equivalent to the finiteness of torsional rigidity $T(M)$.

\medskip

Following \cite{vdBI13}, we have

\begin{lemma}\label{lem-ftr}
Let $P$ be symmetric subcritical operator in $M$ with finite torsional rigidity. Assume further that there exists a function $$c:(0,\infty)\to (0,\infty)$$ 
such that
$k_P^M(x,y,t)$, the positive minimal heat kernel of $P$ in $(M,\!\dm)$, satisfies 
\begin{equation}\label{eq-hk}
k_P^M(x,y,t)\leq c(t)\qquad \forall t>0, x,y\in M.
\end{equation}
Then the spectrum of $P$ on $L^2(M,\!\dm)$ is discrete.

\medskip 

Suppose further that there exists $\gb\geq 0$  and $\tilde c>0$ such that 
$$c(t)\leq \tilde c \min\{t^{-N/2}, t^{-\gb/2}\} \qquad \forall t>0.$$
Then there exists a positive function  $C:\R_+\to\R_+$ such that
\begin{equation}\label{eq-glj}
 \gl_j\geq \min\left\{C(\beta) T(M)^{-2/(\gb+2)}j^{2/(\gb+2)},  C(N)  T(M)^{-2/(N+2)}j^{2/(N+2)}\right\},
 \end{equation}
where  $\{\gl_j\}_{j=0}^\infty$ is the increasing sequence of the eigenvalues  of $P$ (counting multiplicity). 
\end{lemma}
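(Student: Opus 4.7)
The plan is to combine the ultracontractivity encoded by the heat-kernel bound $k_P^M(x,y,t)\leq c(t)$ with the finite torsional rigidity to show that $e^{-tP}$ is Hilbert--Schmidt on $L^2(M,\dm)$ for every $t>0$. Once a quantitative bound on $\|e^{-tP}\|_{\mathrm{HS}}$ is available, the discreteness of the spectrum is immediate, and the estimate \eqref{eq-glj} follows by inserting that bound into the trace identity $\mathrm{Tr}(e^{-2tP})=\|e^{-tP}\|_{\mathrm{HS}}^2$ and optimizing in $t$.

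Set $h_t(x):=(e^{-tP}\mathbf{1})(x)=\int_M k_P^M(x,y,t)\,\dm(y)$. The key input from the torsional rigidity is the pointwise estimate $h_t(x)\leq G_1(x)/t$, which I would derive by exhaustion: on each relatively compact $M_k$ the Dirichlet heat evolutions $h_t^{(k)}\leq 1$ and $t\mapsto h_t^{(k)}(x)$ is nonincreasing by the parabolic maximum principle, so $t\,h_t^{(k)}(x)\leq\int_0^t h_s^{(k)}(x)\,\mathrm{d}s\leq G_1^{(k)}(x)$; Dirichlet monotonicity of the heat and Green kernels then gives $h_t(x)\leq G_1(x)/t$ and $\|h_t\|_{L^1(M,\dm)}\leq T(M)/t$. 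Combined with the elementary inequality $k_P^M(x,y,t)^2\leq c(t)\,k_P^M(x,y,t)$, this yields
\[
\|e^{-tP}\|_{\mathrm{HS}}^2=\int_M\!\!\int_M k_P^M(x,y,t)^2\,\dm(x)\,\dm(y)\leq c(t)\,\|h_t\|_{L^1}\leq\frac{c(t)\,T(M)}{t}<\infty,
\]
so $e^{-tP}$ is compact on $L^2(M,\dm)$ for every $t>0$ and the spectrum of $P$ is discrete.

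For the quantitative part I would order the eigenvalues $\lambda_0\leq\lambda_1\leq\cdots$ and use
\[
(j+1)\,e^{-2t\lambda_j}\leq\sum_{k=0}^{j}e^{-2t\lambda_k}\leq\mathrm{Tr}(e^{-2tP})\leq\frac{c(t)\,T(M)}{t}.
\]
Under the hypothesis $c(t)\leq\tilde c\,t^{-\beta/2}$ this reads $e^{-2t\lambda_j}\leq\tilde c\,T(M)\,t^{-\beta/2-1}/(j+1)$; the choice $t=t_j:=\bigl(e\,\tilde c\,T(M)/(j+1)\bigr)^{2/(\beta+2)}$ makes the right-hand side equal to $e^{-1}$, so $2t_j\lambda_j\geq 1$ and
\[
\lambda_j\geq\frac{1}{2t_j}=\tfrac{1}{2}(e\tilde c)^{-2/(\beta+2)}\,T(M)^{-2/(\beta+2)}\,(j+1)^{2/(\beta+2)}.
\]
The identical optimization under $c(t)\leq\tilde c\,t^{-N/2}$ produces the companion bound, and \eqref{eq-glj} follows by taking the minimum, with $C(\beta)$ and $C(N)$ read off these two formulas.

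The main obstacle I anticipate is the justification of the exhaustion step, specifically the monotone convergence $k_{P_k}^{M_k}\uparrow k_P^M$ and $G_{P_k}^{M_k}\uparrow G_P^M$ for the class of symmetric operators \eqref{symm_P} with possibly non-smooth drift coefficients. In the Schr\"odinger-type reduction \eqref{eq-symm} this is standard Dirichlet form theory (Friedrichs extension plus monotonicity of Dirichlet forms), and in general it is a routine approximation argument, but some care is required to ensure that $G_1^{(k)}\to G_1$ is strong enough to permit interchange of the limit with integration against $\dm$ when passing from the bound for $h_t^{(k)}$ to the one for $h_t$.
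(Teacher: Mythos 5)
There is a genuine gap, and it sits exactly at the step you flag only in passing: the claim that on each $M_k$ the Dirichlet evolution satisfies $h_t^{(k)}\leq 1$ and that $t\mapsto h_t^{(k)}(x)$ is nonincreasing ``by the parabolic maximum principle''. In this section $P$ is only assumed to be a symmetric subcritical operator of the form \eqref{symm_P}, i.e.\ effectively $-\div(A\nabla u)+\tilde c\,u$ with $\tilde c=c-\div b$ of \emph{no prescribed sign}; only $P\geq 0$ holds. The comparison with the constant $1$ requires $1$ to be a supersolution of the parabolic problem, i.e.\ $\tilde c\geq 0$, so the semigroup need not be sub-Markovian here. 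Concretely, for $P=-\Delta+V$ with $V$ negative on some set but $P\geq0$, one has $\partial_t\big(e^{-tP}\mathbf 1\big)\big|_{t=0}=-V>0$ there, so $h_t>1$ for small $t$ and the monotonicity fails; consequently the pointwise bound $h_t(x)\leq G_1(x)/t$, hence $\|h_t\|_{L^1}\leq T(M)/t$, hence your fixed-time bound $\operatorname{Tr}(e^{-2tP})\leq c(t)T(M)/t$, are all unjustified, and both the Hilbert--Schmidt estimate at a fixed $t$ and the subsequent optimization in $t$ collapse. (The exhaustion/monotone-convergence issue you single out as the main obstacle is, by contrast, standard and harmless.)

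The paper's proof avoids fixed-time bounds altogether: writing $T(M)=\int_M\int_0^\infty\int_M k_P^M(x,y,t)\dm(y)\,\mathrm{d}t\dm(x)$, rescaling time and inserting $1\geq k_P^M(x,y,\alpha t)/c(\alpha t)$ (which is just \eqref{eq-hk}, no Markov property), the Chapman--Kolmogorov identity gives
\begin{equation*}
T(M)\;\geq\;(1-\alpha)\int_0^\infty \big(c(\alpha t)\big)^{-1}\int_M k_P^M(x,x,t)\dm(x)\,\mathrm{d}t,
\end{equation*}
an \emph{integrated-in-time} trace bound from which discreteness and \eqref{eq-glj} follow directly (via $\int_0^\infty t^{\gamma}e^{-\ell t}\mathrm{d}t=\Gamma(\gamma+1)\ell^{-\gamma-1}$). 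If you want to keep your fixed-time optimization, you can repair it along these lines: your inequality $\|e^{-sP}\|_{\mathrm{HS}}^2\leq c(s)\|h_s\|_{L^1}$ together with $\int_0^\infty\|h_s\|_{L^1}\,\mathrm{d}s=T(M)$ (Tonelli) gives finiteness of $\operatorname{Tr}(e^{-2sP})=\int_M k_P^M(x,x,2s)\dm(x)$ for a.e.\ $s$, hence compactness of $e^{-tP}$ for all $t>0$ by the semigroup property; and since $P\geq0$ the trace $\sum_j e^{-2s\lambda_j}$ \emph{is} nonincreasing in $s$, so $\int_0^\infty\operatorname{Tr}(e^{-2sP})/c(s)\,\mathrm{d}s\leq T(M)$ yields a fixed-time bound without any sub-Markov assumption --- but this is, in substance, the paper's argument rather than the one you wrote.
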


\begin{proof}
Since $$ G_1(x)= \int_M \int_0^\infty k_P^M(x,y,t) {\rm d}t\, \dm,$$
by Tonelli's theorem, it follows that  for any $0<\ga<1$, we have 
$$T(M)= (1-\ga) \int_0^\infty {\rm d}t  \int_{M\times M} k_P^M(x,y,(1-\ga)t)\dm(y)\dm(x).$$
In light of \eqref{eq-hk} and the semigroup property, we have
\begin{align}\label{trace_class}
T(M) & \!\geq \!(1-\ga)\!\! \int_0^\infty \!\!\!\!\big( c(\ga t)\big)^{\!-1} \! {\rm d}t  \!\!  \int_{M\times M}\!\!\!\! \!\!
k_P^M(x,y,(1-\ga)t)k_P^M(x,y,\ga t)\!\dm(y)\!\dm(x) \notag \\[2mm]
 & =(1-\ga) \int_0^\infty \big( c(\ga t)\big)^{-1} \, {\rm d}t \int_{M} k_P^M(x,x,t)\dm(x).
\end{align}
It follows that the heat operator $k_P^M$ is trace class. So, for each  $t>0$ we have
$$
\int_{M} k_P^M(x,x,t)\dm(x)=\sum_{j=0}^\infty \exp(-\lambda_j t)<\infty,
$$
where $\{\gl_j\}$ is the nonincreasing sequence of all the eigenvalues of $P$ (counting multiplicity).  In particular, $P$ has a discrete $L^2(M,\dm)$-spectrum.   

Estimate \eqref{eq-glj} is obtained as in \cite[Theorem~2]{vdBI13}. Indeed, by \eqref{trace_class} we have 
$$T(M) \!\geq\!  (1-\ga) (\tilde{c})^{-1} \!\! \!\int_0^\infty\!\!\! (\ga t)^{\gb/2}\! \sum_{j=0}^\infty\! {\mathrm e}^{-\lambda_j t}\!\dt\!\geq\!  (1-\ga)(\tilde{c})^{-1}  j  \!\! \int_0^\infty\! \!\!(\ga t)^{\gb/2}  {\mathrm e}^{-\lambda_j t}\!\dt.
$$
Recall that $$\int_0^\infty t^{\gg} {\mathrm e}^{- \ell t}\dt= \frac{\Gg(\gg+1)}{\ell^{\gg+1}}\,.$$ 
 Hence, for  $\ga := \frac{\beta}{\beta+2}$, we obtain \eqref{eq-glj} with $C(\beta)$ given by 
$$
C(\beta) :=   \frac{\beta^{\frac{\beta}{\beta + 2}}}{\beta + 2} \left( \frac{2\Gg((\gb+2)/2)}{\tilde{c}} \right)^{2/(\gb+2)}\!\!. \qquad \qquad\qquad  \qedhere
$$
\end{proof}
\section{Liouville comparison principle}\label{Sec_4.1}
The present section is devoted to the study of Liouville comparison principle for {\em nonsymmetric} elliptic operators. The following theorem should be compared with  Theorem~\ref{YP07-thm} and \cite[Theorem~2.3]{ABG}.
\begin{theorem}\label{liouville_critical_thm}
 Let $M$ be a smooth, noncompact, connected manifold of dimension $N$. Consider two operators  
\begin{equation*}
P_k  :=  \mathcal{L}_{k} - V_k  \qquad    k = 1,2,
\end{equation*}
where each $\mathcal L_k$ is of the form \eqref{operator},  and $V_k \in L^{p}_{\loc}(M ; \mathbb{R})$, where $p>N/2$. Let $\overline{V} (x) = \max \{ V_1(x), V_2(x) \}.$ Suppose 
that there exists $K_1 \Subset K\Subset M$ such that $\mathcal{L}_{1} = \mathcal{L}_{2}$ in $M \setminus K_1$, and $P_k\geq 0$ in $M \setminus K_1$, for $k=1,2$. 

\medskip

Let $G_k$ be a positive supersolution of the equation $P_k u=0$ in $M \setminus K_1$,  such that $G_k$ is a positive solution of the equation $P_k u=0$ in $M \setminus K$ of minimal growth at infinity in $M$, where $k=1, 2$. Suppose that 
\begin{equation}\label{eq-V1-V2}
\frac{|V_1 - V_2|}{2} \leq W:= \frac{1}{4} \left| \nabla \log\left( \frac{G_1}{G_2} \right) \right|^2_{A} \qquad \mbox{in } M\setminus K.
\end{equation}
Then

(a)  $\mathcal{L}_{1}-\overline V \geq 0$ in $M\setminus K$.

\medskip

(b) Assume further the that the following assumptions hold true: 

\begin{enumerate}

\item The operator $P_1$ is critical in $M$, and let $\Phi \in \mathcal{C}_{P_1}(M)$ be its ground state. 
\medskip

\item $P_2 \geq 0$ in $M$, and there exists a real function $\Psi \in W^{1,2}_\loc(M)$ such that $\Psi_+ \neq 0$ and $P_2 \Psi \leq 0$ in $M$.

\medskip
\item The following inequality holds: 
$$
\Psi_+ \leq C \Phi \qquad \mbox{in }  M.
$$
\end{enumerate}
Then the operator $P_2$ is critical in $M$ and $\Psi$ is its ground state. In particular, the equation $P_2 v=0$ admits a unique positive supersolution in $M$. 
Moreover, $\Psi\asymp \Phi$ in $M$. 
\end{theorem}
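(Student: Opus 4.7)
My plan is to derive part (a) by exhibiting an explicit positive supersolution of $(\mathcal{L}_1-\overline V)u=0$ in $M\setminus K$, and then to leverage part (a) together with the hypothesis $\Psi_+\le C\Phi$ and the criticality of $P_1$ to force criticality of $P_2$ with ground state $\Psi$.

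For part (a), the key identity is the \emph{$1/2$-interpolation construction} underlying Theorem~\ref{thm_DFP}. Since $\mathcal{L}_1=\mathcal{L}_2$ on $M\setminus K_1\supset M\setminus K$, and each $G_k$ is a positive solution of $P_k u=0=(\mathcal{L}_1-V_k)u$ in $M\setminus K$, a direct computation (rewriting $\mathcal{L}_1$ in the form $-\div(A\nabla \cdot)+B\cdot\nabla+C$ and using that the first-order contributions are linear in $\log \phi$) yields, for $\phi:=\sqrt{G_1G_2}$,
\begin{equation*}
\mathcal{L}_1\phi=\Bigl(\tfrac{V_1+V_2}{2}+W\Bigr)\phi\qquad\text{in }M\setminus K,
\end{equation*}
with $W=\tfrac14|\nabla\log(G_1/G_2)|^2_A$ arising from the expansion of $|\nabla\log\phi|^2_A$; nonsymmetry of $\mathcal L_1$ plays no role here. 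Combining this identity with the pointwise inequality $\overline V-\tfrac{V_1+V_2}{2}=\tfrac{|V_1-V_2|}{2}\le W$ gives $(\mathcal{L}_1-\overline V)\phi=\bigl(W-\tfrac{|V_1-V_2|}{2}\bigr)\phi\ge 0$ in $M\setminus K$. The Allegretto--Piepenbrink theorem for operators of the form \eqref{operator} (recalled through the characterization $\lambda_0\ge 0\iff$ existence of a positive supersolution, see Remark~\ref{altenatecritical}) then yields $\mathcal{L}_1-\overline V\ge 0$ in $M\setminus K$.

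For part (b), I would proceed in three steps. \textbf{Step 1 (positivity of $\Psi$):} Exploiting that coefficients are in $L^{p}_{\loc}$ with $p>N/2$, the supersolution $\Psi$ has a continuous representative; since $P_2\Psi\le 0$ and $P_2\ge 0$ in $M$, the generalized strong minimum principle together with $\Psi_+\not\equiv 0$ and connectedness of $M$ shows that $\Psi$ cannot touch zero in the interior, so $\Psi>0$ throughout $M$. \textbf{Step 2 ($\Psi$ has minimal growth at infinity for $P_2$):} The upper bound $\Psi\le C\Phi$ (valid globally by Step 1) together with the fact that $\Phi$ is a $P_1$-ground state makes $\Phi$ a supersolution of $P_2u=0$ outside $K_1$ (since $V_1\le \overline V$ gives $P_1\ge \mathcal{L}_1-\overline V$; analogously any positive supersolution of $P_2u=0$ outside $K_1$ is a supersolution of $(\mathcal{L}_1-\overline V)u=0$, which by part (a) is nonnegative). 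A standard comparison on the exhaustion $M_n$, using that $\Phi$ has minimal growth for $P_1$ and $\Psi\le C\Phi$, yields that $\Psi$ has minimal growth at infinity for $P_2$ as well. \textbf{Step 3 (conclusion):} Being a positive solution of $P_2u=0$ of minimal growth at infinity, $\Psi$ is an Agmon ground state, and hence $P_2$ is critical by Definition~\ref{critical} and Remark~\ref{altenatecritical}; uniqueness of positive supersolutions in the critical case and the inequality $\Psi\le C\Phi$ combined with the reverse comparison between two ground states (both comparable to $\sqrt{G_1G_2}$ away from $K$ via part (a) and Harnack on compact pieces) give $\Psi\asymp \Phi$ in $M$.

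The main obstacle I anticipate is Step 2: one needs to convert the one-sided bound $\Psi_+\le C\Phi$ into a genuine minimal-growth property of $\Psi$ for the \emph{nonsymmetric} operator $P_2$, without access to a variational principle. This is where part (a) plays the decisive role, providing the common positive supersolution $\sqrt{G_1G_2}$ of $(\mathcal{L}_1-\overline V)u=0$ near infinity against which both $\Phi$ and $\Psi$ can be tested. Step 1, while conceptually routine, also requires care because the strong maximum principle for $P_2\ge 0$ with merely $L^{p}_{\loc}$ lower-order coefficients must be invoked in its generalized form rather than its classical form.
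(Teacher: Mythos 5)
Your part (a) is essentially the paper's argument: the same interpolation identity for $\sqrt{G_1G_2}$, the same splitting $\overline V=\tfrac{V_1+V_2}{2}+\tfrac{|V_1-V_2|}{2}$, and nonnegativity via the existence of a positive supersolution. Part (b), however, has a genuine gap, and it sits exactly at the point you flagged as "conceptually routine". In Step 1 you claim that $P_2\ge 0$, $P_2\Psi\le 0$ and $\Psi_+\neq 0$ force $\Psi>0$ by a strong minimum principle. But $\Psi$ is a \emph{subsolution}, not a supersolution, and subsolutions of a nonnegative operator satisfy no minimum principle and may change sign: take $P_2=-\Delta$ in $\R^3$ and $\Psi(x)=x_1$. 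The positivity of $\Psi$ is not an a priori regularity fact; it is essentially the conclusion of the theorem, so assuming it in Step 1 makes the argument circular. Steps 2--3 inherit this problem and add two more: the parenthetical comparison goes the wrong way (a positive supersolution of $P_ku=0$ need \emph{not} be a supersolution of $(\mathcal{L}_1-\overline V)u=0$, since $\overline V\ge V_k$; likewise $\Phi$ need not be a $P_2$-supersolution because $V_2-V_1$ has no sign), and even granting the growth bound, $\Psi$ is only a subsolution, so "minimal growth" in the sense of Definition~\ref{groundstate} does not directly make it an Agmon ground state -- you must still show it is a solution.

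The paper closes these gaps with a different mechanism that is absent from your proposal: the maximal $\varepsilon$-trick against an \emph{arbitrary} positive supersolution $f$ of $P_2u=0$ in $M$. From part (a), the generalized maximum principle gives $G_1\le C_1\overline G\le C_2(G_1G_2)^{1/2}$ in $M\setminus K$ (where $\overline G$ has minimal growth for $\mathcal{L}-\overline V$), hence $G_1\le C_3G_2$ there, and then $\Psi_+\le C\Phi\le C\tilde C G_1\le C'G_2\le C_4 f$ in $M\setminus K$ by minimal growth of $G_2$ for $P_2$. This makes $\varepsilon_0:=\max\{\varepsilon:\ \varepsilon\Psi\le f \mbox{ in } M\}$ finite and positive; $w:=f-\varepsilon_0\Psi$ is a nonnegative supersolution of $P_2$, the strong maximum principle (legitimately applied to the nonnegative supersolution $w$, not to $\Psi$) plus maximality of $\varepsilon_0$ force $w\equiv 0$, and so $f$ is a positive multiple of $\Psi$. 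This single step simultaneously yields $\Psi>0$, that $\Psi$ solves $P_2\Psi=0$, criticality of $P_2$, uniqueness of the positive supersolution, and (with the displayed chain) $\Psi\asymp\Phi$. To repair your proof you would need to replace Steps 1--3 by this (or an equivalent) comparison against every positive supersolution of $P_2$, rather than trying to upgrade $\Psi$ directly.
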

\begin{proof} 
The proof relies on criticality theory, the supersolution construction \cite{DFP}, and on the well known ``maximal $\varepsilon$-trick''. We denote the restriction of the operators 
 $ \mathcal{L}_k$ on $M \setminus K_1$ by $ \mathcal{L}$.

(a)  We note that $U:=(G_1 G_2)^{1/2}$ is a positive solution of the equation 
 \begin{equation}\label{optimal_Hardy_liouville}
\left(\mathcal{L} - \left(\frac{V_1 + V_2}{2} \right) - W \right) v = 0 \quad \ \mbox{ in } 
M \setminus K ,
\end{equation}
where $W$ is given in \eqref{eq-V1-V2}. Since   $$
\overline{V} = \max \{ V_1(x), V_2(x) \} = \frac{V_1 + V_2}{2} + \frac{|V_1 - V_2|}{2},
$$
assumption \eqref{eq-V1-V2} implies that $U$ is a positive supersolution of the equation $(\mathcal{L}-\overline V)u \geq 0$ in $M\setminus K_1$. Hence,  
$\mathcal{L}-\overline V \geq 0$  in $M\setminus K_1$. 

\medskip

(b) Let $\overline G$ be a positive solution of the equation $(\mathcal{L}-\overline V)u=0$ in $M \setminus K$ of minimal growth at infinity in $M$.
Then by the generalized maximum principle and the fact that $G_1$ has minimal growth at infinity in $M$ we have that 
\begin{equation}\label{G1leqU}
G_1\leq C_1\overline G \leq C_2 U = C_2(G_1 G_2)^{1/2} \qquad  \mbox{in } M \setminus K.
\end{equation}
 Hence, $G_1 \leq C_3 G_2$ in $M \setminus K$.

Since $\Gf \leq \tilde C G_1$ in  $M \setminus K$, and $G_2$ has minimal growth at infinity in $M$ for $P_2$, we have that for any positive supersolution $f$ of the equation $P_2 u=0$ in $M$ we have
\begin{equation}\label{5.7}
\Psi_+ \leq C \Phi \leq C\tilde C G_1\leq C\tilde C C_3G_2 \leq C_4 f \qquad  \mbox{in } 
 M \setminus K. 
\end{equation}  
 
 Define
   $$\varepsilon_0 = \mbox{max} \{ \varepsilon : \varepsilon \Psi(x) \leq f(x) \quad \forall x \in M \}.$$
  In light of \eqref{5.7},  it follows that $\varepsilon_0>0$ is well defined, and hence,
$w(x) := f(x) - \varepsilon_0 \Psi (x) $ is a nonnegative supersolution of the equation  
$P_2 v =0 $ in $M$. 
 
By the strong maximum principle, either  $w  > 0$  or $w= 0$ in $M$.
Let us assume that $ w > 0$. Then by replacing $f$ with $w$ and repeating
  the above argument, we conclude that there exists $\delta > 0$ such that $f - (\vge_0+\gd) \Psi > 0$, 
  which contradicts the maximality of $\varepsilon_0$. Hence, $w = 0$ in $M$, which in turns implies that
   $$
   \Psi (x) = \Psi_+ =\vge_0 f(x)>0 \qquad \forall x \in M .
   $$
   Since $f$ is an arbitrary positive supersolution of $P_2 u = 0$ in $M$, it follows that $P_2$ is critical in $M$ and $\Psi$ is its ground state. The assertion $\Psi\asymp \Phi$ in $M$ follows now from \eqref{5.7} since $\Psi (x) = \Psi_+>0$ in $M$ and $G_2$ is a positive solution of the equation $P_2 u=0$ in $M \setminus K$ of minimal growth at infinity in $M$.
    \end{proof}

 \begin{rem}
 {\rm
 Under the assumptions of Theorem~\ref{liouville_critical_thm}, it follows that the positive minimal Green functions of $P_k$   in $M\setminus K$, where $k=1,2$,  are semiequivalent. Moreover, \eqref{G1leqU} implies that these Green functions are also semiequivalent to the positive minimal Green function of $\mathcal L -\overline V$ in $M\setminus K$. We note that using  
 \cite[Theorem~4.3]{YP95} it follows that under the assumptions of Theorem~\ref{liouville_critical_thm}, the operators $\mathcal L_k -\overline V$ might be supercritical in $M$. 
 }
 \end{rem}
 The following example demonstrates that inequality \eqref{eq-V1-V2} might not 
 hold and still the Liouville comparison principle holds true.
 \begin{example}\label{ex_a}
	{\rm
 Let $P_1 = -\Delta$, $V_1=0$ in $\mathbb{R}^2$. Then it is well known that $P_1$ is critical 
 and $1$ is the corresponding ground state.  
 Let $P_2 = - \Delta - V_2$ be nonnegative  
 in $\mathbb{R}^2$, where  $V_2\in L^{\infty}(\mathbb{R}^2)$ is a radially symmetric potential that satisfies 
\begin{equation}\label{V}
V_2(x) = \frac{\lambda}{|x|^2} \quad \mbox{in} \ \mathbb{R}^2 \setminus B(0,1),
\end{equation}
 where $\lambda < 0$ be any real number. A straightforward computation yields 
 $G_2(x) := |x|^{- \sqrt{-\lambda}} $ is positive solution in 
 $\mathbb{R}^2 \setminus B(0,1)$ of minimal growth at infinity in $\R^2$
 for $P_2$. Also $G_1(x) = 1$ is a positive solution of minimal growth at infinity in $\R^2$ for $P_1$, so, $G_1\not \asymp G_2$ near infinity.  Note that
 $$
 \frac{|V_1 - V_2|}{2} = \frac{|\lambda|}{2|x|^2} > \frac{|\lambda|}{4|x|^2}
  = \frac{1}{4} \left| \nabla \log\left( \frac{G_1}{G_2} \right) \right|^2.
 $$
 On the other hand, the Liouville comparison principle (Theorem~\ref{YP07-thm}) applies for the above $P_1$ and $P_2$, since these operators are symmetric.  In particular, if the equation $P_2 u=0$ in $M$ admits a nonzero, nonnegative, bounded subsolution, then $P_2$ is critical in $M$.   
  }
\end{example}
Next, we slightly modify the above example by adding a drift term to the Laplacian.   
  \begin{example}\label{ex_b}
  	{\rm 
  Consider the operator		
  $$P_1 = -\Delta - b \, \dfrac{\chi_{B(0, 1)^{*}} }{r} \partial_r  \quad  \mbox{ in } \
 \mathbb{R}^2, 
 $$
 and $V_1=0$,  where $r : =|x|$, $b$ is a negative constant, and $\chi_{B(0, 1)^{*}}$ is the
 indicator function of $B(0, 1)^{*}:=\mathbb{R}^2 \setminus B(0, 1)$. 
 Then $P_1$ is critical in   $\mathbb{R}^2$, with a ground state equals $1$. Let 
 $$P_2 : = - \Delta - b \, \dfrac{\chi_{B(0, 1)^{*}} }{r} \partial_r - V_2,$$
  where $V_2\in L^{\infty}(\mathbb{R}^2)$ satisfies \eqref{V}, such that 
   $P_2\geq 0$  in $\R^2$.  Then as before we easily find that 
 $G_2(x) := |x|^{\frac{-b - \sqrt{ b^2 - 4 \lambda }}{2}}$ is a positive solution in $B(0, 1)^{*}$ of minimal growth at infinity in $\R^2$ for $P_2$. 
 Also, $G_1(x) = 1$ is a positive solution of minimal growth at infinity in $\R^2$ for $P_1$, so, $G_1\not \asymp G_2$ near infinity.  We note that for $|x|>1$ we have
 $$
 \frac{1}{4} \left| \nabla \log\left( \frac{G_1}{G_2} \right) \right|^2 = \frac{|\lambda|}{4 |x|^2} - 
 \frac{b^2}{8 |x|^2} \left[ \sqrt{1 + \frac{4|\lambda|}{b^2}} - 1\right]
 $$
This immediately yields  as before 
 $$
 \frac{|V_1 - V_2|}{2}= \frac{|\lambda|}{2|x|^2} > \frac{1}{4} \left| \nabla \log\left( \frac{G_1}{G_2} \right) \right|^2.
  $$ 
  
  On the other hand, Theorem 2.14 applies for the above $P_1$ and $P_2$, since the operator $P_1$ is symmetric in 
  $L^2( \mathbb{R}^2, {\rm d}m),$ where 

$$ \dm=  m(x)\dx : =\left\{\begin{array}{ll}
\dx & \text{if $x \in B(0, 1)$\,},
\\[2mm]
|x|^{b}\dx & \text{if  $x \in \mathbb{R}^2 \setminus B(0, 1)$\,}.
\end{array}
\right.
$$
In particular, if the equation $P_2 u=0$ in $M$ admits a nonzero, nonnegative, bounded subsolution, then $P_2$ is critical in $M$.    
 }
 \end{example}
\section{Green function estimate on the hyperbolic space} 
\label{sec_green_function_hyperbolic}
As an application of our results, we  study the behaviour of the positive minimal Green function of the 
shifted Laplacian on ${\mathbb H}^N$, the real hyperbolic space. It is well known that a Cartan-Hadamard manifold $M$ whose sectional curvatures is bounded above by a strictly negative constant satisfies the Poincar\'e inequality, or in other words, the bottom of the $L^2$-spectrum of the  Laplace-Beltrami  on $M$ is strictly positive. The most important example of such a manifold is ${\mathbb H}^N$. Let   
 $\Delta_{\mathbb{H}^N}$ denote the Laplace-Beltrami operator on the hyperbolic space, then 
the \emph{generalized principal eigenvalue} of $-\Delta_{\mathbb{H}^N}$ is given by 
$$
\lambda_0(-\Delta_{{\mathbb H}^{N}}, \textbf{1}, {\mathbb H}^N)= \frac{(N-1)^2}{4}.
$$
Moreover, by using  explicit bounds for the heat kernel on ${{\mathbb H}^N}$ (see e.g. \cite{DA}) one can show that the nonnegative
 operator 
 $$P:= -\Delta_{\mathbb H^N}-(N-1)^2/4$$
admits a positive minimal 
Green function (for $N\geq 2$). In other words, $P$ is subcritical in ${{\mathbb H}^N}$. 

\medskip

Fix $x_0\in\mathbb H^N$, and let $G(x) : = G^{\mathbb H^N}_{-\Delta_{\mathbb H^N}}(x, x_0)$. For $0< \gl < 1$, let  
$$0<\ga_-<1/2<\ga_+<1$$
 be the roots of the equation  $\lambda = 4 
\alpha(1 - \alpha)$. Using the supersolution construction \cite{DFP}, it follows that  $G^{\alpha_{\pm}}$ are  solutions of the 
equation 
$$
(-\Delta_{\mathbb H^N} - \lambda W) G^{\alpha_{\pm}} = 0 
\quad \mbox{in }   \mathbb{H}^N \setminus  \{ x_0 \}, \quad \mbox{where } W := \dfrac{1}{4} \dfrac{|\nabla G|^2}{|G|^2}\, .
$$

\medskip

  The asymptotic of $W$ is given by the following lemma.
\begin{lem}
Let $N \geq 2$ and $r : = {\rm d}(x, x_0).$ Then $W(r)$ satisfies
$$
W(r) = \frac{(N-1)^2}{4} + \frac{(N-1)^3}{N + 1} \mathrm{e}^{-2r} + o(\mathrm{e}^{-2r}) \quad \mbox{as} \  r \rightarrow \infty.
$$
\end{lem}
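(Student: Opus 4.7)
The plan is to reduce the problem to an explicit one-variable asymptotic computation by exploiting the two-point homogeneity of $\mathbb{H}^N$.

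Because every isometry of $\mathbb{H}^N$ fixing $x_0$ commutes with $\Delta_{\mathbb{H}^N}$, the minimal positive Green function is radial; thus $G(x) = G(r)$ with $r = d(x, x_0)$, and in particular $|\nabla G|^2_{A} = |\nabla G|^2 = (G'(r))^2$, so
\[
W(r) = \frac{1}{4}\left(\frac{G'(r)}{G(r)}\right)^{2}.
\]
In geodesic polar coordinates around $x_0$, the equation $-\Delta_{\mathbb{H}^N} G = 0$ for $r > 0$ becomes the ODE $-G''(r) - (N-1)\coth(r)\,G'(r) = 0$, equivalently $\bigl((\sinh r)^{N-1} G'(r)\bigr)' = 0$. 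Minimality at infinity forces $G$ to be the positive solution that decreases to $0$, and hence up to an irrelevant positive multiplicative constant (which cancels in $W$)
\[
G(r) = \int_r^{\infty} \frac{\ds}{(\sinh s)^{N-1}}, \qquad G'(r) = -\frac{1}{(\sinh r)^{N-1}}.
\]

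The next step is to obtain a two-term asymptotic expansion of each factor. Writing $(\sinh s)^{N-1} = 2^{-(N-1)} \mathrm{e}^{(N-1)s}(1 - \mathrm{e}^{-2s})^{N-1}$ and using the binomial series
\[
(1 - \mathrm{e}^{-2s})^{-(N-1)} = 1 + (N-1)\mathrm{e}^{-2s} + O(\mathrm{e}^{-4s}),
\]
I get immediately
\[
G'(r) = -\frac{2^{N-1}}{\mathrm{e}^{(N-1)r}}\bigl(1 + (N-1)\mathrm{e}^{-2r} + O(\mathrm{e}^{-4r})\bigr),
\]
and, integrating term by term from $r$ to $\infty$,
\[
G(r) = \frac{2^{N-1}}{N-1}\, \mathrm{e}^{-(N-1)r}\!\left(1 + \frac{(N-1)^2}{N+1}\,\mathrm{e}^{-2r} + O(\mathrm{e}^{-4r})\right).
\]

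Finally, divide the two series and expand at first order in the small parameter $\mathrm{e}^{-2r}$ to obtain
\[
\frac{G'(r)}{G(r)} = -(N-1)\left(1 + \frac{2(N-1)}{N+1}\,\mathrm{e}^{-2r} + O(\mathrm{e}^{-4r})\right),
\]
square, divide by $4$, and collect terms. The leading constant is $(N-1)^2/4$, and the coefficient of $\mathrm{e}^{-2r}$ works out to $(N-1)^3/(N+1)$, yielding exactly the claimed asymptotic. The computations are elementary; the only point requiring some care is bookkeeping of the $O(\mathrm{e}^{-4r})$ remainders through the division and squaring, which is a standard Taylor-expansion check and poses no real obstacle.
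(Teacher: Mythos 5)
Your proposal is correct and follows essentially the same route as the paper: both use the explicit formula $G(r)=\int_r^\infty(\sinh s)^{-(N-1)}\,\mathrm{d}s$, expand $(\sinh s)^{-(N-1)}=2^{N-1}\mathrm{e}^{-(N-1)s}\bigl(1+(N-1)\mathrm{e}^{-2s}+O(\mathrm{e}^{-4s})\bigr)$, integrate term by term, and divide/square to get the stated two-term asymptotics. Your only additions (justifying radial symmetry and deriving the Green function from the ODE, and keeping $O(\mathrm{e}^{-4r})$ remainders instead of $o(\mathrm{e}^{-2r})$) are harmless refinements of the same computation.
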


\begin{proof}
For the hyperbolic space ${\mathbb H}^N$, the Green function of the Laplace-Beltrami operator is given by 
$$ G(x)= \tilde G(r) : = \int_{r}^{\infty} (\sinh s)^{-(N-1)} \, {\rm d}s. $$
We have 
\begin{align*}
(\sinh s)^{-(N-1)} = 2^{N-1}  \mathrm{e}^{-(N-1)s} (1 -  \mathrm{e}^{-2s})^{-(N-1)}.
\end{align*}
Therefore,  $r \rightarrow \infty$ yields 
$$
(\sinh r)^{-(N-1)} = 2^{N-1} \left(  \mathrm{e}^{-(N-1)r} + (N-1) \mathrm{e}^{-(N+1)r} + o\big( \mathrm{e}^{-(N+1)r}\big) \right).
$$ 
Furthermore, as $r\rightarrow \infty$ we have 
$$
\int_{r}^{\infty} \!\!\!(\sinh s)^{-(N-1)}\!\ds \!= \!2^{N-1} \!\!\left[\!\frac{1}{N-1}  \mathrm{e}^{-(N-1)r} \!+\! \frac{N-1}{N+1}  \mathrm{e}^{-(N+ 1)r} \!\!+ o\big( \mathrm{e}^{-(N+1)r}\big)\!\right]\!.
$$
Hence, as $r \rightarrow \infty$ we have
$$
W(r) \!=\! \frac{1}{4} \left[ \frac{(\sinh r)^{-2(N-1)}}{\left(\int_{r}^{\infty} (\sinh s)^{-(N-1)} {\rm d}s\right)^2} \right]=\frac{(N-1)^2}{4}  +\frac{(N-1)^3}{N + 1}  \mathrm{e}^{-2r} + o( \mathrm{e}^{-2r}).
$$ 
\end{proof}
Now we state the following perturbative result. 
\begin{thm}
Let $N \geq 2$ and $0<\lambda < 1.$ Then there holds 
\begin{equation}\label{green_estimate_hyperbolic}
G^{\mathbb H^N}_{-\Delta_{\mathbb H^N} - \lambda \frac{(N-1)^2}{4}}(x,x_0) \asymp G^{\mathbb H^N}_{-\Delta_{\mathbb H^N} - \lambda W}(x,x_0) 
\asymp G^{\alpha_+}(x) \quad \mbox{in } \mathbb{H}^N \setminus B(x_0, 1),
\end{equation}
where $\lambda = 4 \alpha_+(1 - \alpha_+)$ and $ \frac{1}{2}<\alpha+ <1$.
\end{thm}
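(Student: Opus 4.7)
The plan is to decompose the asymptotic equivalence into two pieces: first, the ``unperturbed'' Green function $G^{\mathbb H^N}_{P_0}$, where $P_0:=-\Delta_{\mathbb H^N}-\lambda(N-1)^2/4$, is asymptotically equivalent to $G^{\alpha_+}$; second, the Green function of $P_1:=-\Delta_{\mathbb H^N}-\lambda W$ is asymptotically equivalent to that of $P_0$ via viewing $P_1$ as a small perturbation of $P_0$. Both steps are performed outside the ball $B(x_0,1)$, so the Green-type singularity at $x_0$ plays no role.

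For the first step, I would record the explicit asymptotics $G(x)=\tilde{G}(r)\asymp \mathrm{e}^{-(N-1)r}$ as $r:=d(x,x_0)\to\infty$, which follow directly from the integral representation $\tilde G(r)=\int_r^\infty (\sinh s)^{-(N-1)}\,\mathrm{d}s$ and the expansion already carried out in the preceding lemma. This yields $G^{\alpha_+}(x)\asymp \mathrm{e}^{-(N-1)\alpha_+r}$ away from $x_0$. On the other hand, for the constant potential $\mu:=\lambda(N-1)^2/4<(N-1)^2/4$, a radial analysis of $-\Delta_{\mathbb H^N}-\mu$ using $\coth r\to 1$ gives the indicial equation $\sigma^2-(N-1)\sigma+\mu=0$, with roots $(N-1)\alpha_\pm$ where $\alpha_\pm=(1\pm\sqrt{1-\lambda})/2$; the positive minimal Green function is known to decay at the faster rate, $G^{\mathbb H^N}_{P_0}(x,x_0)\asymp \mathrm{e}^{-(N-1)\alpha_+ r}$ (which can be obtained either from explicit hypergeometric representations on $\mathbb H^N$ or from a subordination identity against the heat kernel, $G^{\mathbb H^N}_{P_0}(x,x_0)=\int_0^\infty \mathrm{e}^{\mu t}\,k_{-\Delta_{\mathbb H^N}}(x,x_0,t)\,\mathrm{d}t$, combined with Davies-type heat-kernel bounds). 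Matching decay rates delivers $G^{\mathbb H^N}_{P_0}(\cdot,x_0)\asymp G^{\alpha_+}$ on $\mathbb H^N\setminus B(x_0,1)$.

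For the second step, write $P_1=P_0-\lambda V$ with $V:=W-(N-1)^2/4$. The preceding lemma furnishes $V(r)=O(\mathrm{e}^{-2r})$ as $r\to\infty$, while near $x_0$ the potential $V$ is locally bounded with the prescribed $L^q_{\loc}$ regularity. I would then show that $V$ is a small perturbation of $P_0$ in $\mathbb H^N$, namely that
\[
\lim_{n\to\infty}\sup_{x,y\in M_n^*}\int_{M_n^*}\frac{G^{\mathbb H^N}_{P_0}(x,z)\,|V(z)|\,G^{\mathbb H^N}_{P_0}(z,y)}{G^{\mathbb H^N}_{P_0}(x,y)}\,\mathrm{d}m(z)=0,
\]
using the pointwise bound $G^{\mathbb H^N}_{P_0}(x,y)\asymp \mathrm{e}^{-(N-1)\alpha_+ d(x,y)}$, the Gromov-hyperbolic geodesic geometry (so that $d(x,z)+d(z,y)-d(x,y)$ is controlled from below by a Gromov product that is large when $z$ lies far from a geodesic connecting $x,y$), and the exponential decay of $V$. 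Once $V$ is established as a small perturbation, Remark~\ref{rem_sp} supplies $E_+(P_0,V,\mathbb H^N)=S_+(P_0,V,\mathbb H^N)\setminus\{\lambda_0\}$. The hypothesis $0<\lambda<1$ ensures $\lambda\in S_+(P_0,V,\mathbb H^N)$ (since $P_1=-\Delta_{\mathbb H^N}-\lambda W$ is subcritical by the supersolution construction applied to $G^{\alpha_\pm}$), whence $G^{\mathbb H^N}_{P_1}\asymp G^{\mathbb H^N}_{P_0}$ on $\mathbb H^N\times\mathbb H^N\setminus\{(x,x):x\in\mathbb H^N\}$.

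The main obstacle will be the $3G$-type estimate underlying the small-perturbation claim: exponential volume growth on $\mathbb H^N$ competes with exponential decay of both $G^{\mathbb H^N}_{P_0}$ and $V$, and one must verify that the former is dominated for every value of $\alpha_+\in(1/2,1)$. An attractive alternative is to invoke Theorem~\ref{main-thm} directly, which reduces the problem to verifying the quasimetric property of $G^{\mathbb H^N}_{P_0}$; this property follows from Ancona-type boundary Harnack estimates on Gromov-hyperbolic manifolds, and once available gives the same conclusion without a direct $3G$-integral computation. Combining (i) and (ii) then yields the stated chain of equivalences on $\mathbb H^N\setminus B(x_0,1)$.
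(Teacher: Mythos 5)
Your strategy is sound and would yield the theorem, but it takes a genuinely different route from the paper in both halves. For the link with $G^{\alpha_+}$, the paper never computes the asymptotics of the constant-shift operator $P_0:=-\Delta_{\mathbb H^N}-\lambda(N-1)^2/4$: it compares $G^{\alpha_+}$ \emph{directly} with $G^{\mathbb H^N}_{-\Delta_{\mathbb H^N}-\lambda W}$, noting that $G^{\alpha_+}/G^{\alpha_-}\to 0$ at infinity, so that by \cite[Proposition~6.1]{DFP} both are positive solutions of $(-\Delta_{\mathbb H^N}-\lambda W)v=0$ of minimal growth at infinity, hence comparable. You instead establish $G^{\mathbb H^N}_{P_0}(x,x_0)\asymp e^{-(N-1)\alpha_+ r}$ (correct, via explicit formulas or an ODE/subordination argument, using $\lambda<1$ so the two indicial rates are distinct) and match decay rates with $G^{\alpha_+}\asymp e^{-(N-1)\alpha_+ r}$; this buys explicitness at the cost of extra work the paper avoids. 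For the second half both proofs reduce to showing that the $O(e^{-2r})$ difference of potentials is a small perturbation of $P_\lambda=P_0$, but the paper obtains this with no $3G$ computation at all: it dominates the difference by the radial, nonincreasing, integrable majorant $\Phi(r)=e^{-(2-\varepsilon)r}$, invokes Ancona's theorem \cite[Theorem~1]{AN} to get $G^{\mathbb H^N}_{P_\lambda}\asymp G^{\mathbb H^N}_{P_\lambda+\Phi\mathbf 1_{\mathbb H^N\setminus B(x_0,R)}}$, deduces that $\Phi$ is a $G$-bounded perturbation, and then gets smallness of $\tilde W$ for free from $\tilde W/\Phi\to 0$, concluding via Remark~\ref{rem_sp} exactly as you do. Your direct $3G$ estimate (two-sided exponential bound on $G_{P_0}$, thin-triangle/Gromov-product geometry, and the threshold $2(N-1)\alpha_+>N-1$, i.e.\ $\alpha_+>1/2$, which you correctly identify) is feasible, but it is precisely the step you flag as the main obstacle and it is left unexecuted; it also needs separate care near the diagonal, where the bound $G_{P_0}\asymp e^{-(N-1)\alpha_+ d}$ fails. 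One caveat on your fallback: Theorem~\ref{main-thm} does not reduce matters to the quasimetric property alone — it also requires the (nonnegative) perturbation to be a $G$-semibounded perturbation, so an anchored $3G$-type integral estimate would still have to be verified there, whereas the paper's Ancona-majorant argument delivers $G$-boundedness directly.
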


\begin{proof}
Recall that $G^{\mathbb H^N}_{-\Gd_{\mathbb H^N} -\gl W}(x,x_0)$ is a positive solution of minimal 
growth at infinity of the equation $(-\Gd_{\mathbb H^N} -\gl W)v=0$  in $\mathbb{H}^N$. On the other hand, $$\lim_{r\to\infty}\frac{G^{\alpha_+}(r)}{G^{\alpha_-}(r)}=0.$$
Therefore,  \cite[Proposition~6.1]{DFP} implies that $G^{\alpha_+}$ is also a positive solution of minimal 
growth at infinity of the equation $(-\Gd_{\mathbb H^N} -\gl W)v=0$  in $\mathbb{H}^N$. Thus, 
 $$G^{\mathbb H^N}_{-\Delta_{\mathbb H^N} - \lambda W}(x,x_0) \asymp G^{\alpha_+}(x) \qquad \mbox{in } \mathbb{H}^N \setminus B(x_0, 1).$$
 Hence, it remains to prove that
 $$G^{\mathbb H^N}_{-\Delta_{\mathbb H^N} - \lambda \frac{(N-1)^2}{4}}(x,x_0) \asymp G^{\mathbb H^N}_{-\Delta_{\mathbb H^N} - \lambda W}(x,x_0) 
  \qquad \mbox{in } \mathbb{H}^N \setminus B(x_0, 1).$$
Note that  for $r \rightarrow \infty,$ we have 
$$\gl W(r)- \gl \frac{(N-1)^2}{4}= \gl \frac{(N-1)^3}{N + 1}  \mathrm{e}^{-2r} + o( \mathrm{e}^{-2r}).$$ 
Consequently, Remark~\ref{rem_sp} implies that it suffices to show that $\tilde W(r) :=  \mathrm{e}^{-2r} + o( \mathrm{e}^{-2r})$ is a small perturbation of the operator $P_\lambda := -\Delta_{\mathbb H^N} -
 \lambda \frac{(N-1)^2}{4}$ in $\mathbb H^N$.

\medskip

We follow  the approach of Ancona \cite[corollary~6.1]{AN}. Let us choose $\Phi(r) :=  \mathrm{e}^{-(2 - \varepsilon)r}$ with $0<\varepsilon <1$. 
Then it follows 
\begin{equation}\label{limit_hyperb}
\lim_{r \rightarrow \infty} \dfrac{\Phi(r)}{\tilde W(r)} = + \infty.
\end{equation}
Moreover, $\Phi$ is nonnegative, nonincreasing  and $\int_{0}^{\infty} \Phi (r) {\rm d} r < \infty.$ Therefore, by \cite[Theorem~1]{AN}, 
we conclude 
\begin{equation}\label{green_equiv_hyperb}
G^{\mathbb H^N}_{P_\lambda} \asymp G^{\mathbb H^N}_{P_\lambda + \Phi(r) \textbf{1}_{\mathbb{H}^N \setminus B(x_0, R)}} \quad \mbox{in } \mathbb H^N\times \mathbb H^N 
\end{equation}
for large $R$. Consequently,  \eqref{green_equiv_hyperb} and arguments given in \cite{YP3, YP1} implies that $\Phi$ is a $G$-bounded perturbation of $P_\gl$ in $\mathbb{H}^N$. 

Hence, it follows from \eqref{limit_hyperb} that $\tilde W$ is a small perturbation for $P_\lambda$. In particular,  by Remark~\ref{rem_sp} we have
\begin{equation*}
G^{\mathbb H^N}_{P_\lambda} \asymp G^{\mathbb H^N}_{-\Gd_{\mathbb H^N} -\gl W} \quad \mbox{in } \mathbb H^N\times \mathbb H^N\setminus \{(x,x)\mid x\in \mathbb H^N\}.   
\end{equation*}
Thus, \eqref{green_estimate_hyperbolic} follows.
 \end{proof}

\medskip

 \begin{center}
 	{\bf Acknowledgments} 
 \end{center}
 

D.~G.  is supported in part by an INSPIRE faculty fellowship (IFA17-MA98) and is grateful to the Department of Mathematics at the Technion for the hospitality during his visit. He also  acknowledges the support of the Israel Council for Higher Education (grant No. 32710877). The authors acknowledge the support of the Israel Science Foundation (grant 970/15) founded by the Israel Academy of Sciences and Humanities.

\end{document}